\theoremstyle{plain}
\newtheorem{prop}{Property}[section]
\newtheorem{theorem}[prop]{Theorem}
\newtheorem{proposition}[prop]{Proposition}
\newtheorem{corollary}[prop]{Corollary}
\newtheorem{lemma}[prop]{Lemma}
\newtheorem{conjecture}{Conjecture}
\newtheorem{claim}{Claim}
\theoremstyle{remark}
\def\x{\mathbf x}
\def\e{\mathbf e}
\def\vol{{\rm{vol}}}
\newcommand{\Z}{{\mathbb Z}}
\newcommand{\N}{{\mathbb N}}
\newcommand{\beeq}{\begin{eqnarray*}}
\newcommand{\eneq}{\end{eqnarray*}}
\newcommand{\be}{\begin{equation}}
\newcommand{\ee}{\end{equation}}
\definecolor{darkred}{cmyk}{.3,.9,.80,.2}
\title[Additive Volume of Sets Contained in Few Arithmetic Progressions]{Additive Volume of Sets Contained \\ in Few Arithmetic Progressions}
\author{Gregory A. Freiman}
\address{ 
	The Raymond and Beverly Sackler Faculty of Exact Sciences\\
	School of Mathematical Sciences\\
	Tel Aviv University}
\email{grisha@post.tau.ac.il}
\author{Oriol Serra}
\address{Department of Mathematics, Universitat Polit\`ecnica de Catalunya and\\ 
	Barcelona Graduate School of Mathematics, Barcelona}
\email{oriol.serra@upc.edu}
\thanks{O.\,Serra was supported by the Spanish Ministerio de Econom\'ia y Competitividad under project MTM2014-54745-P}
\author{Christoph Spiegel}
\address{Department of Mathematics, Universitat Polit\`ecnica de Catalunya and\\ 
	Barcelona Graduate School of Mathematics, Barcelona}
\email{christoph.spiegel@upc.edu}
\thanks{C.\,Spiegel was supported by the Spanish Ministerio de Econom{\'i}a y Competitividad FPI grant under the project MTM2014-54745-P, the project MTM2017-82166-P and the Mar{\'i}­a de Maetzu research grant MDM-2014-0445.}
\date{\today}
\begin{document}

\maketitle

\begin{abstract}
	A conjecture of Freiman gives an exact formula for the largest volume of a set of integers $A \subset \Z$ with given cardinality $k = |A|$ and doubling $T = |2A|$. The formula is known to hold when $T \le 3k-4$, for some small range over $3k-4$ and for families of structured sets called chains. In this paper we extend the formula to sets of every dimension and prove it for sets composed of three segments, giving structural results for the extremal case. A weaker extension to sets composed of a bounded number of segments is also discussed.
\end{abstract}

\section{Introduction}

Let $A\subset \Z$ be a finite set of integers. The {\it Minkowski sum} of $A$ is $A+A=\{a+a': a,a'\in A\}$. The {\it doubling} of $A$ is the cardinality of $2A=A+A$. The Freiman--Ruzsa theorem giving the structure of sets of integers with small doubling is one of the central results in  Additive Number Theory. It states that a set $A$ with doubling $|2A|\le c|A|$ is a dense set of a multidimensional arithmetic progression $P$, where the density $|A|/|P|$ and dimension of $P$ depend only on $c$, see Freiman~\cite{Freiman87},  Bilu~\cite{Bilu99} and Ruzsa~\cite{Ruzsa94}. The estimation of the best lower bounds for the density of $A$ in $P$ was the object of a long series of papers and it was eventually brought to its essentially best values by Schoen~\cite{Schoen2011}.

At a conference in Toronto in 2008, Freiman proposed a precise formula for the largest possible volume of a set of integers $A \subset \Z$ with given doubling $T=|2A|$ in terms of a specific parametrization of the value of $T$, see~\cite{Freiman2014}. We start by recalling some definitions in order to state this conjecture.

Given abelian groups $G$ and $G'$, two sets $A\subset G$ and $B\subset G'$ are {\it Freiman isomorphic of order $2$} ($F_2$--isomorphic for short) if there is a bijection $\phi:A\to B$ such that, for every $x,y,z,t \in A$, we have 
\begin{equation}
	 x+y=z+t \quad \Leftrightarrow \quad \phi(x)+\phi(y)=\phi(z)+\phi(t).
\end{equation}

The {\it additive dimension} $\dim (A)$ of a set $A \subset \Z$ is the largest $d \in \N$ such that there exists a set $B \subset \Z^d$ not contained in a hyperplane of $\Z^d$ which is $F_2$--isomorphic to $A$. Note that any $d$--dimensional set $A$ of cardinality $k$  satisfies $d \leq k-1$ and that furthermore by results of Freiman~\cite{Freiman73} as well as Konyagin and Lev~\cite{KL2000} we have
\begin{equation} \label{eq:T_bounds}
	(d+1)k-{d+1 \choose 2} \leq |2A| \leq {k \choose 2}+d+1.
\end{equation}

The {\it volume} $\vol (A)$ of a $d$--dimensional set $A$ is defined to be the minimum cardinality of the convex hull among all sets in  $\Z^d$ that are $F_2$--isomorphic to $A$.

We say that a set of integers $A \subset \Z$ is in {\it normal form} if $\min (A)=0$ and $\gcd (A)=1$. We call
	\begin{equation}
		\tilde{A} = \big( A-\min (A) \big) / \gcd \big( A-\min (A) \big)
	\end{equation}
	the {\it normalization} of $A$ since it is a set in normal form and it is $F_2$--isomorphic to $A$. Note that for any $1$--dimensional set $A$ we have
	\begin{equation}
		\vol(A) = \max(\tilde{A})+1.
	\end{equation}
	If $\min(A)=0$, then the {\it reflection} of $A$ is defined as $A^-=-A+\max(A)$. The reflection of $A$ is certainly isomorphic to $A$.

We are interested in obtaining upper bounds for the volume of a set $A$ of integers in terms of its cardinality $|A|$, the cardinality of its doubling $|2A|$ and its dimension $\dim(A)$. We denote the maximum volume of all sets $A$ of integers with cardinality $k$, doubling $T$ and dimension $d$ by
\begin{equation}
	\vol(k,T,d) = \max \{\vol (A): A\subset \N \;, |A|=k, |2A|=T \text{ and } \dim(A)=d\}.
\end{equation}
A set $A$ is {\it extremal} if  $\vol(A) = \vol(|A|,|2A|,\dim(A))$. The following is a more general and slightly reformulated version of the previously mentioned conjecture of Freiman, which can be traced back to \cite{Freiman73}. Its notable addition is that it takes the dimension of a set into consideration.

\begin{conjecture}\label{conj:main}
	Given any $k,T,d \in \N$ such that
	\begin{equation}\label{eq:2}
		T = (d+c) k - {d+c+1 \choose 2} + d + b+1,
	\end{equation}
	where 
	\begin{equation}
		1 \leq c \leq k-d-1 \, \text{ and } \, 0 \leq b \leq k-d-c-1,
	\end{equation}
	we have
	\begin{equation}\label{eq:max}
		\vol (k,T,d) = 2^{c-1} \, (k-c+b)+1.
	\end{equation}
\end{conjecture}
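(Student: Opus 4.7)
The plan is to prove (\ref{eq:max}) by establishing the lower bound through an explicit chain construction and the upper bound by structural analysis of extremal sets.

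For the lower bound, I would exhibit a $d$-dimensional \emph{chain} set achieving the target volume. A natural candidate starts from a minimal $d$-dimensional base (the simplex $\{0, e_1, \ldots, e_d\} \subset \Z^d$, which realizes the lower bound in~(\ref{eq:T_bounds})) and successively appends points along a fixed coordinate direction: first a geometric portion of $c-1$ points, each placed so as to double the one-dimensional extent of the projection, and then an arithmetic progression filling out the prescribed cardinality $k$. A direct calculation, using the exact doubling of the simplicial part and the multiplicative behaviour of the geometric extension, should verify that the resulting set has doubling matching~(\ref{eq:2}) and volume exactly $2^{c-1}(k-c+b)+1$.

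For the upper bound, I would proceed by induction on $c$ with the dimension $d$ fixed. The base case $c = 1$ corresponds to minimal doubling with surplus $b$; here the target $\vol(A) \le k + b$ should follow from the Freiman--Ruzsa structure theorem combined with results of~\cite{Freiman73}, as the small surplus forces $A$ to be a dense subset of a $d$-dimensional arithmetic progression. For the inductive step, given an extremal $A \subset \Z^d$, one would project onto a hyperplane along a direction minimizing the resulting doubling, decompose $A$ into the corresponding parallel fibers, and combine the inductive hypothesis applied to the projection with the identity $|2A| = T$—which constrains the combined contribution of intra-fiber and cross-fiber sums—to deduce the claimed bound on $\vol(A)$.

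The main obstacle will be the structural step: showing that any extremal $A$ must admit a chain decomposition matching the construction. The abstract indicates that this is resolved only for sets contained in three segments, signalling that the general structural characterization is the principal difficulty. The natural but apparently unavailable tool would be a compression or exchange operation that reshapes an arbitrary extremal set into chain form while preserving $(k, T, d)$ and not decreasing $\vol$; without such an operation, the induction on $c$ cannot close in full generality, consistent with the partial results announced in the paper.
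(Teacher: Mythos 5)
The statement you are addressing is Conjecture~\ref{conj:main}; the paper does not prove it, and neither does your proposal. Your outline is a reasonable survey of how one might attack the problem, but it contains two genuine gaps. First, even your base case $c=1$ is open in general: the bound $\vol(A)\le k+b$ for arbitrary dimension $d$ and arbitrary admissible $b$ is only known for $b=0$ (Stanchescu) and, in low dimension, for restricted ranges of $b$ (Freiman). The Freiman--Ruzsa structure theorem cannot supply it, since it only places $A$ inside a generalized arithmetic progression whose dimension and density are controlled up to constants, whereas \eqref{eq:max} is an exact extremal statement; no amount of massaging the qualitative containment yields the precise value $2^{c-1}(k-c+b)+1$.

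Second, and as you yourself concede, the inductive step on $c$ does not close: there is no known compression or exchange operation that converts an extremal set into a chain while preserving $(k,T,d)$ and not decreasing the volume, and no argument that the fiber decomposition under a projection interacts controllably with the doubling $T$. What the paper actually establishes is the conjecture restricted to sets that are unions of three segments (Theorem~\ref{thm:3seg}), via a case analysis on the dimension using Corollary~\ref{cor:segdim}, the $(3k-4)$-theorem and the Lev--Smeliansky theorem, together with the weaker Proposition~\ref{prop:small_segments}; the general statement is left open. Your lower-bound construction is essentially the known one (cf.\ the set $A_s$ following Conjecture~\ref{conj:segmentFreiman} and the examples cited from \cite{Freiman73, FS2017}) and is fine, but the upper bound is the entire content of the problem and remains unproved in your proposal.
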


Note that given any $k \in \N$, $1 \leq d \leq k-1$ and $T \in \left\{ (d+1)k-{d+1 \choose 2},\dots,{k \choose 2}+d+1 \right\}$, there are uniquely defined 
	\begin{equation}
		c = c(k,T,d) \quad \text{and} \quad b = b(k,T,d)
	\end{equation}
subject to the boundary conditions stated in the conjecture, such that $T$ can be expressed as the right--hand side of \eqref{eq:2}. It follows that Conjecture~\ref{conj:main} states a tight upper bound on the volume of any possible set of integers. If $A$ has cardinality $|A| = k$, dimension $\dim(A) = d$ and doubling $|2A|=T$ then we call this uniquely determined $c=c(k,T,d)$ the {\it doubling constant} of $A$.

There are  examples showing that the right hand side in \eqref{eq:max} is at least a lower bound for $\vol(k,T,d)$, see e.g.~\cite{Freiman73, FS2017}. Thus an extremal set $A$ has volume
$$
vol(A)\ge vol(k,T,d).
$$
Furthermore, equality has been established in a few cases:
\begin{enumerate}
	\item	by Freiman~\cite{Freiman73} for one--dimensional sets satisfying $T \leq 3k-4$, that is either $c = 1$ and any admissible $b$ or $c = 2$ and $b = 0$, with an additional structural description of extremal sets given in~\cite{Freiman2009},
	\item	by Freiman~\cite{Freiman73} as well as by Hamidoune and Plagne~\cite{HamPlagne2001} for one--dimensional sets if $T = 3k-3$, that is $c = 2$ and $b = 1$, with a structural description of the extremal case due to Jin~\cite{Jin2007},
	\item	by Freiman~\cite{Freiman73} for two--dimensional sets satisfying $k \geq 10$ and $T \leq 10/3 \, k - 6$, that is $c = 1$ and $0 \leq b \leq k/3-2$, with a structural description of any such set,
	\item	by Jin~\cite{Jin2008} using tools from non-standard analysis in the case of large one--dimensional sets satisfying $T \leq (3+\epsilon) k$, that is $c = 2$ and $0 \leq b \leq \epsilon k$, for some $\epsilon > 0$,
	\item	by Stanchescu~\cite{Stanchescu2010} for any $d$--dimensional set satisfying $c = 1$ and $b = 0$,
	\item	by Freiman and Serra~\cite{FS2017} for a class of one--dimensional sets called chains, which can be seen as extremal sets build by a greedy algorithm, and any admissible values of the doubling constant $c$.
\end{enumerate}
  
In order to give further evidence towards the validity of this conjecture, we consider sets composed of a given number of segments. Throughout the paper we say that   $A \subset \Z$  is  the union of $s$  segments if 
\begin{equation}\label{eq:ssegments}
	A = P_1 \cup \cdots \cup P_s
\end{equation}
where each $P_i$ is a segment of length $k_i$ with  $\max P_i +1< \min P_{i+1}$ for $1 \leq i < s$ and moreover  $k_i > 1$ for some $1 \leq i \leq s$. Regarding the doubling of such a  set, we have the upper bound 
\begin{equation}\label{eq:doublingsegments}
	|2A|=\left| \cup_{1\le i\le j\le s} (P_i+P_j)\right|\le \sum_{1\le i\le j\le s} (|P_i|+|P_j|-1)=(s+1)|A|-{s+1\choose 2}.
\end{equation}
Equality holds if and only if the sums $P_i+P_j$ are pairwise distinct, in which case the set $A$ has dimension $s$.

As previously mentioned, Conjecture~\ref{conj:main} has been proved for sets $A$ with doubling $|2A|\le 3|A|-3$, and the structure of extremal sets for this range of doubling is well understood. In spite of many efforts, not much is known about the exact  maximum volume of  sets with doubling at least $3|A|-2$. This motivates us to consider sets composed of three segments and doubling larger than $3|A|-3$.  Our main result is to show that the statement of Conjecture~\ref{conj:main} holds for sets $A$ composed of three segments, also giving a structural description of the extremal cases.

\begin{theorem}\label{thm:3seg}
	Let $A$ be an extremal  set with cardinality $k>7$ and doubling $|2A|>3k-4$  consisting of three segments. 
	Then $A$ is isomorphic to one of the following sets:
	\begin{enumerate}
		\item[(i)] $([0,k+b-2]\setminus [1,b])\cup \{2(k+b-2)\}$ with  $\dim (A)=1$,  $\vol (A)=2k+2b-3$ and $|2A|=3k-4+b$, 
		\item[(ii)] $([0, k+b+i-3]\setminus [k-i-1,k+b-3]) \cup \{2(k+b-2)\}$ for $1\le i\le k/3$ and $2 \leq b \leq k-2i-1$, with  $\dim (A)=1$, $\vol (A)=2k+2b-3$ and  $|2A|=3k-4+b$, 
		\item[(iii)] $([0,k+b-2]\setminus [1,b])\times\{0\}  \cup \{(0,1)\}\subset \Z^2$  with  $\dim (A)=2$,  $\vol (A)=k+b$ and  $|2A|=3k-3+b$, or
		\item[(iv)] $([0,k_1-1]\times \{(0,0)\})\cup ([0,k_2-1]\times \{(0,1)\})\cup ([0,k_3-1]\times \{(1,0)\})\subset \Z^3$  with $k_1+k_2+k_3=k$, $k_1, k_2, k_3\ge 1$, $\dim (A)=3$, $\vol (A)=k$ and  $|2A|=4k-6$.
	\end{enumerate}
\end{theorem}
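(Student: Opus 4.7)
I would proceed by case analysis on the additive dimension $d=\dim(A)$. The three-segment doubling bound~\eqref{eq:doublingsegments} together with~\eqref{eq:T_bounds} forces $d\in\{1,2,3\}$ for $k>7$, and the four families (i)--(iv) in the theorem correspond to these three dimensions, with $d=1$ splitting into two sub-families.

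For $d=3$, equality in~\eqref{eq:doublingsegments} is required, which is equivalent to the six pairwise sums $P_i+P_j$ being disjoint; in particular $|2A|=4k-6$ and the parameters become $(c,b)=(1,0)$. Stanchescu's theorem from item~(5) then gives $\vol(A)=k$, and a direct $F_2$-isomorphism argument identifies $A$ with three parallel segments placed at three non-collinear heights in an elementary triangle of $\Z^2$, producing exactly case (iv).

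For $d=2$, the allowable doubling is $3k-3\le|2A|\le 4k-7$. I would argue that any extremal $3$-segment set $A$ with $\dim(A)=2$ decomposes (up to $F_2$-isomorphism in $\Z^2$) as a one-dimensional two-segment set in one coordinate direction together with a single transverse point; computing $|2A|$ under this decomposition and enforcing the extremality constraints $\vol(A)=k+b$ and $|2A|=3k-3+b$ pins down case (iii).

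For $d=1$ I would work in $\Z$ with $A=\bigcup_{i=1}^3 [a_i,a_i+k_i-1]$ in normal form, so $\vol(A)=a_3+k_3$. The hypothesis $|2A|>3k-4$ together with the three-segment structure restrict the doubling constant $c$ to $\{2,3\}$. The case $c=3$ is ruled out by showing that the extremal volume it demands is incompatible with a three-segment realization. For $c=2$, the extremality conditions $\vol(A)=2k+2b-3$ and $|2A|=3k-4+b$ impose tight constraints on the six sum-intervals $P_i+P_j$; a careful analysis of their overlap patterns isolates exactly the two families (i) and (ii). The main obstacle lies in this last step: the six pairwise sums admit a combinatorial zoo of possible overlap configurations depending on the relative sizes and positions of the three segments, and the bulk of the work is to rule out every configuration other than the two listed, while also excluding $c=3$.
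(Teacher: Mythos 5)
Your top-level strategy --- a case split on $\dim(A)\in\{1,2,3\}$ --- is exactly the paper's, and your $d=3$ case is fine (the paper gets the disjointness of the six sums $P_i+P_j$ directly from Corollary~\ref{cor:segdim} rather than from Stanchescu, but either route works). The difficulty is that for $d=2$ and $d=1$ the proposal asserts the structural conclusions it is supposed to establish and defers all of the substantive work. For $d=2$, you claim the extremal set ``decomposes as a one-dimensional two-segment set plus a single transverse point,'' but that is precisely what must be proved. Corollary~\ref{cor:segdim} only gives ${\rm rank}(S_A)=1$, and up to isomorphism there are two genuinely different single relations: $2P_1\cap(P_1+P_2)\neq\emptyset$, which does lead to your picture (via Lemma~\ref{lem:2segments} and an extremality computation forcing $k_3=1$), and $(P_1+P_3)\cap 2P_2\neq\emptyset$, in which the three segments sit at three distinct collinear heights and admit no such decomposition. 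The paper has to rule the latter configuration out by an explicit comparison of $|2A|$ against $\vol(A)$ (it survives only for $k\le 6$); your plan does not mention it.

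For $d=1$, which is the heart of the theorem, the proposal contains no argument beyond ``a careful analysis of overlap patterns,'' and this is not a finite enumeration one can wave at: the number of segments and gaps is unbounded in $k$. The paper's proof requires several nontrivial ideas that are absent from your plan: first, showing that $2P_2$, $P_2+P_3$, $2P_3$ are pairwise disjoint (via $\max\{k_2,k_3\}<\ell_2+2$, using the extremality lower bound $\ell\ge k+2b-3$ on the total gap length); second, and crucially, the claim $k_3=1$, proved by an exchange argument that moves $\min(P_3)$ adjacent to $P_1$ or $P_2$, compares doublings, and invokes the Lev--Smeliansky bound (Theorem~\ref{thm:ls}) to reach a contradiction with extremality; and finally a split on $k_1\le k_2$ versus $k_1>k_2$ that produces families (i) and (ii) respectively, the latter requiring the containment $2P_2\subseteq P_1+P_3$. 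You also propose to exclude $c=3$ separately, which is a reasonable extra bookkeeping step the paper handles implicitly, but without the ideas above the $c=2$ analysis cannot be completed. As it stands the proposal is a correct outline of the paper's case structure, not a proof.
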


The  extremal sets composed of three segments described in Theorem~\ref{thm:3seg}  are illustrated for $k=11$ and $|2A|=3k-1$ in Figure~\ref{fig:extr11}.

\begin{figure}[h]
\begin{center}	
 	\begin{tikzpicture}[scale=0.6]
	\foreach \x in {0,..., 24}
  	{
	\foreach \j in {1,...,4}
  		\draw (\x, \j) circle(3pt);
  	}

	\foreach \i in {0,...,24}
	\node[above] at (\i,4) {\tiny $\i$};
  	
	\foreach \x in {0, 4, 5, 6, 7, 8, 9,10,11,12, 24}
  	{
  		\draw[fill] (\x, 4) circle(3pt);
  	}

  	\foreach \x in {0, 1, 2, 3, 4, 5, 6, 7, 8, 12, 24}
  	{
  		\draw[fill] (\x, 3) circle(3pt);
  	}
  	\foreach \x in {0, 1, 2, 3, 4, 5, 6, 7, 12, 13, 24}  	
	{
  		\draw[fill] (\x, 2) circle(3pt);
  	}

	\foreach \x in {0, 1, 2, 3, 4, 5, 6, 12, 13, 14, 24}
	{
  		\draw[fill] (\x, 1) circle(3pt);
  	}

  	\end{tikzpicture}
 \end{center}
 	
	\
	
	\begin{center}
	\begin{tikzpicture}[scale=0.6]
  	\draw [help lines] (0,1) grid (11,2);
	
	\foreach \x in {0, 3,4, 5, 6, 7, 8, 9,10,11}
  	{
  		\draw[fill] (\x, 1) circle(3pt);
  	}
	\draw[fill] (0,2) circle(3pt);
	
\end{tikzpicture}
\end{center}

\caption{Extremal sets, up to isomorphism,  for $k=11$ and $|2A|=3k-1$.}\label{fig:extr11}
\end{figure}
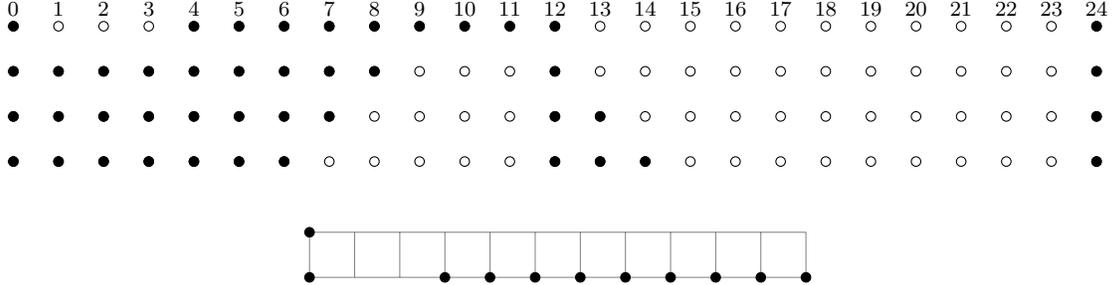

\medskip

We also believe that the following statement, regarding an upper bound on the volume of $1$--dimensional sets composed of few disjoint segments, should hold as well. It is independent of the doubling of such sets and in fact one may derive it from Conjecture~\ref{conj:main} as the case with maximum doubling without too much effort.

\begin{conjecture} \label{conj:segmentFreiman}
	Let $A$ be a $1$--dimensional set and let $s$ be the minimum number of disjoint segments into which it can be decomposed. If $s \leq |A|-1$, then
	\begin{equation} \label{eq:segment_covering}
		\vol(A)  \leq 2^{s-1} \big( |A| - s \big)+1.
	\end{equation}
\end{conjecture}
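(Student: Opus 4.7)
The bound $2^{s-1}(|A|-s)+1$ equals the right hand side of~\eqref{eq:max} specialised to $(c,b,d)=(s,0,1)$, corresponding via~\eqref{eq:2} to the doubling value $T_0 := (s+1)k - \binom{s+2}{2} + 2$, where $k = |A|$. Following the remark immediately preceding the conjecture, the plan is to take Conjecture~\ref{conj:main} for granted and reduce the statement to the combinatorial bound
\[
(\ast)\qquad |2A| \le (s+1)k - \binom{s+1}{2} - (s-1) \;=\; T_0,
\]
valid for every $1$--dimensional set composed of $s$ disjoint segments.

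The naive estimate~\eqref{eq:doublingsegments} only gives $|2A| \le (s+1)k - \binom{s+1}{2}$, which translates into the parameters $(c,b)=(s,s-1)$ of~\eqref{eq:2} and thus into the volume bound $2^{s-1}(k-1)+1$ which is too weak. Geometrically, the sharpening $(\ast)$ reflects the fact that the natural realisation $A' \subset \Z \times \Z^{s-1}$ obtained by placing each segment $P_i$ inside the coordinate line $\Z \times \{e_{i-1}\}$ (with $e_0 = 0$ and $e_1,\dots,e_{s-1}$ the standard basis) achieves simultaneously $\dim(A')=s$ and $|2A'|=(s+1)k-\binom{s+1}{2}$, so that equality holds in~\eqref{eq:doublingsegments}. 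The hypothesis $\dim(A)=1$ then means that $A$ carries at least $s-1$ independent additional $F_2$--relations beyond those present in $A'$, and each such relation forces a fresh coincidence among the $\binom{s+1}{2}$ sumset segments $P_i + P_j$, shaving at least one element off $|2A|$ compared with $|2A'|$.

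Once $(\ast)$ is in hand, the conclusion is a routine optimisation. Let $(c,b)=(c(k,|2A|,1),b(k,|2A|,1))$ be the Freiman parameters associated with $A$. Conjecture~\ref{conj:main} gives $\vol(A) \le 2^{c-1}(k-c+b)+1$, and a direct case analysis over admissible pairs $(c,b)$ subject to $T(c,b,1) \le T_0$ shows that
\[
2^{c-1}(k-c+b) \le 2^{s-1}(k-s),
\]
with equality at $(c,b)=(s,0)$ and also at the boundary pair $(c,b)=(s-1,k-s-1)$; this is exactly the required bound~\eqref{eq:segment_covering}.

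The main obstacle is the combinatorial inequality $(\ast)$. The heuristic counting above is not yet rigorous, because a single sumset coincidence might in principle account for more than one of the $s-1$ dimensional directions that must be collapsed to reach $\dim(A)=1$; one has to show that the $s-1$ relations forced by the dimension drop genuinely produce $s-1$ distinct decreases in $|2A|$. We expect this to be handled either by induction on $s$, removing a segment while carefully tracking both the dimension and the doubling of the resulting set, or by a direct dimension-counting argument in the spirit of the proof of Theorem~\ref{thm:3seg}; in the cases $s \le 3$ the bound $(\ast)$ is already consistent with the structural information obtained in this paper, which provides a sanity check for the general statement.
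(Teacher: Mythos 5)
The statement you are addressing is stated in the paper as a \emph{conjecture}: the paper does not prove it in general, and only establishes the cases $s\le 4$ (Proposition~\ref{prop:small_segments}) by an unconditional argument. Your proposal instead assumes Conjecture~\ref{conj:main} and reduces \eqref{eq:segment_covering} to the doubling bound $(\ast)$; this is exactly the reduction the authors allude to in the sentence preceding the conjecture (``one may derive it from Conjecture~\ref{conj:main} as the case with maximum doubling''), and your bookkeeping of the parameters $(c,b)$ is correct --- $T_0$ corresponds to $(c,b,d)=(s,0,1)$ and the right-hand side of \eqref{eq:max} is non-decreasing in $T$ along the parametrization, so the optimisation step is fine. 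The decisive gap is that Conjecture~\ref{conj:main} is itself open precisely in the range your reduction needs: for $s\ge 3$ one has $T$ up to $(s+1)k-\binom{s+1}{2}-(s-1)>3k-3$, i.e.\ doubling constant $c=s\ge 3$, which is covered by none of the known cases (1)--(6) listed in the introduction. A conditional derivation of one open conjecture from another is a useful remark but not a proof of the statement.

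Your second gap, the inequality $(\ast)$, is in fact fillable, and the worry you raise (that one sumset coincidence might account for several dimension drops) does not materialise. By Corollary~\ref{cor:segdim}, $\dim(A)=1$ forces $\mathrm{rank}(S_A)=s-1$. The rows of $S_A$ are differences $\phi(u)-\phi(w)$, where $\phi(\{i,j\})=\e_i+\e_j$ and $uw$ runs over the edges of the intersection graph on the $\binom{s+1}{2}$ intervals $P_i+P_j$; since such differences sum to zero along any cycle, $\mathrm{rank}(S_A)$ is at most the number of edges of a spanning forest of that graph, namely $\binom{s+1}{2}-c$ with $c$ the number of connected components. As the union of the intervals in a component of size $t$ has cardinality at most the sum of their lengths minus $(t-1)$, one gets $|2A|\le (s+1)k-\binom{s+1}{2}-\mathrm{rank}(S_A)$, which is $(\ast)$. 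Even with this repaired, your argument remains conditional, whereas the paper's partial proof for $s\le 4$ is unconditional and genuinely different in structure: it assumes $\vol(A)>2^{s-1}(|A|-s)+1$, uses Lemma~\ref{lem:pipi+1} (an induction on $s$ via filling in a gap $L_j$) to force all gaps to be long, deduces from \eqref{eq:iji'j'} that too few intersections among the $P_i+P_j$ can occur, and concludes from Corollary~\ref{cor:segdim} that $\dim(A)\ge 2$, a contradiction. If you want an unconditional result, that is the route to extend.
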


The set $A_s=\{0,1,\ldots ,k-s\}\cup \{2^i(k-s), i=1,\ldots ,s-1\}$ shows that this conjectured upper bound would be tight. Note that, if we only know that a set $A$ is $1$--dimensional, Freiman~\cite{Freiman73} gave $\vol (A) \leq 2^{k-2}+1$ as an upper bound on the volume of $A$. In this paper, we show the validity of Conjecture~\ref{conj:segmentFreiman} for some small values of $s$.
\begin{proposition}\label{prop:small_segments}
	The statement of Conjecture~\ref{conj:segmentFreiman} holds for $s \leq 4$.
\end{proposition}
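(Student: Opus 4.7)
The plan is to handle each case $s \in \{1,2,3,4\}$ separately. The case $s=1$ is trivial since then $A$ is a single segment with $\vol(A) = |A|$, agreeing with \eqref{eq:segment_covering}.

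For $s=2$, I would write $A = P_1 \cup P_2$ in normal form with $|P_i| = k_i$, and after reflecting if necessary assume $k_1 \geq k_2$; let $a_2 = \min P_2$. The key step is to show $a_2 \leq 2k_1 - 2$: if instead $a_2 \geq 2k_1 - 1 \geq k_1 + k_2 - 1$, then the three intervals $P_1 + P_1 = [0, 2k_1-2]$, $P_1 + P_2 = [a_2, a_2+k_1+k_2-2]$ and $P_2 + P_2 = [2a_2, 2a_2+2k_2-2]$ are pairwise disjoint. The natural lift $\phi \colon A \to \Z^2$ sending $x \mapsto (x,0)$ on $P_1$ and $x \mapsto (x-a_2, 1)$ on $P_2$ is then an $F_2$--isomorphism onto a set not contained in any line, contradicting $\dim(A) = 1$. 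Consequently $\vol(A) = a_2 + k_2 \leq 2k_1 + k_2 - 2 \leq 2k-3$.

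For $s \in \{3,4\}$ the argument iterates the same lift-obstruction principle via a structural lemma. Write $A = P_1 \cup \cdots \cup P_s$ in normal form with $a_j = \min P_j$ and, after relabeling and reflecting if necessary, $k_1 = \max_j k_j$. I would prove that for each $2 \leq j \leq s$,
\[
a_j \leq 2 \max P_{j-1} = 2(a_{j-1} + k_{j-1} - 1).
\]
Iterating this recursion with $a_1 = 0$, together with $\vol(A) = a_s + k_s$ and $\sum_j k_j = k$, a short calculation shows that $\vol(A) \leq 2^{s-1}(k-s)+1$, with equality attained precisely when $k_1 = k-s+1$ and $k_j = 1$ for $j \geq 2$ (matching the extremal configuration $A_s$ from the discussion preceding the statement).

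The lemma would be established as follows. The arithmetic collisions within $P_1$ force any $F_2$--isomorphism $\phi \colon A \to B$ to restrict to an affine progression on $P_1$ with some direction $\alpha$, and cross-relations of the form $(a_j + r) + (r' - r) = r' + a_j$ for $r < r' \leq k_1-1$ force $\phi|_{P_j}$ to share the same direction. For $B$ to be $1$--dimensional the anchor $\phi(a_j)$ must then lie on the line $\Z\alpha$, and this pinning requires a nontrivial 2--term relation $a_j = y + z$ with $y, z \in A \setminus \{0\}$; such $y, z$ must lie in $P_1 \cup \cdots \cup P_{j-1}$, forcing $a_j \leq 2 \max P_{j-1}$. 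The main obstacle is to rule out that a 3--term ``case-b'' relation $a_j + x = y + z$ with $x \neq 0$ could substitute as an anchoring mechanism when no direct 2--term decomposition exists: such relations only constrain relative positions along $P_j$ or between consecutive segments, leaving $\phi(a_j)$ free to move into a new coordinate direction and yielding a 2--dimensional $F_2$--isomorphic image. The verification proceeds by examining the possible segment-size patterns $(k_1, \ldots, k_s)$ and the small set of positions where such case-b relations can originate; for $s \leq 4$ the collection of configurations is manageable, but the number of cases grows rapidly with $s$, which is why the proposition is stated only in this range.
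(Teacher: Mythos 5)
Your $s=1$ and $s=2$ arguments are correct, and the $s=2$ case is essentially the paper's (separating $2P_1$, $P_1+P_2$, $2P_2$ and invoking a two--dimensional lift). The cases $s=3,4$, however, rest entirely on the structural lemma $a_j\le 2\max P_{j-1}$, and this lemma is false as stated. First, the normalization ``after relabeling and reflecting, $k_1=\max_j k_j$'' is not available: the segments are ordered on the line, so the only order--changing isomorphism is reflection, which reverses them; a largest \emph{interior} segment cannot be moved to the front. Second, the lemma fails outright whenever $P_1$ is a singleton, since it then asserts $a_2\le 2\max P_1=0$. A concrete witness is $A=\{0\}\cup\{2\}\cup[4,K+3]\cup\{2K+6\}$ with $K\ge 5$: the rows $\e_1-\e_2$, $2\e_2-\e_1-\e_3$ and $\e_1+\e_4-2\e_3$ of $S_A$ (coming from $(P_1+P_3)\cap(P_2+P_3)$, $2P_2\cap(P_1+P_3)$ and $(P_1+P_4)\cap 2P_3$) have rank $3$, so by Corollary~\ref{cor:segdim} this set is genuinely one--dimensional with $s=4$, yet $a_2=2>0$, and the same defect persists after reflection. (Conjecture~\ref{conj:segmentFreiman} does hold for this set, but your recursion cannot establish it.) Moreover, the mechanism you propose for proving the lemma --- that pinning $\phi(a_j)$ onto the line requires a two--term representation $a_j=y+z$ --- is also incorrect: $A=\{0,1,2\}\cup\{4\}\cup\{7\}$ is one--dimensional via $2+2=0+4$ and $1+7=4+4$, yet $7$ admits no representation $y+z$ with $y,z\in A\setminus\{0\}$. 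The ``case--b'' relations you defer, such as $(P_1+P_j)\cap 2P_{j-1}\ne\emptyset$ contributing the row $\e_1+\e_j-2\e_{j-1}$, do anchor $\phi(a_j)$ once the earlier segments are anchored; they cannot be dismissed as only constraining relative positions, and classifying which combinations of them force $\dim(A)=1$ is exactly the hard part that your sketch leaves open.

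For comparison, the paper sidesteps the anchoring analysis entirely by an induction on $s$: if $\vol(A)>2^{s-1}(k-s)+1$, then filling in any gap $L_j$ yields a set with $s-1$ segments and the same volume, and the inductive hypothesis forces every gap length to satisfy $\ell_j\ge k-s\ge k_i+1$. This separates $P_i+P_j$ from $P_i+P_{j+1}$, so the only possible intersections among the sums are the nested ones $(P_i+P_j)\cap(P_{i'}+P_{j'})$ with $i<i'\le j'<j$; Corollary~\ref{cor:segdim} then shows $\dim(A)\ge 2$ in every configuration except one for $s=4$, which is killed by a short chain of inequalities on the $\ell_j$. If you want to salvage a direct approach, you would need a correct replacement for the anchoring lemma that accounts for anchoring through later segments and through $2P_j$ itself; the paper's gap--filling induction is the cleaner route.
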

One may prove Conjecture~\ref{conj:segmentFreiman} for further moderate values of $s$ along the lines presented in the proof of Propositon~\ref{prop:small_segments}, at the cost of a more involved case analysis. A general proof of this conjecture avoiding the increase of cases would be of interest.

\medskip

\noindent {\bf Outline. } The paper is organized as follows: in Section~\ref{sec:dimension} we discuss the dimension of sets composed of few segments. We then we prove Proposition~\ref{prop:small_segments} in Section~\ref{sec:prop_proof} and Theorem~\ref{thm:3seg} in Section~\ref{sec:thm_proof}. 

\section{The dimension of sets contained in few segments} \label{sec:dimension}  

Konyagin and Lev~\cite{KL2000} established a formula for the dimension of a given set $A \subset \Z^m$ of cardinality $k$. Let us write $A = \{a_1, \dots, a_k\}$ and introduce some necessary notation. For $1 \leq i \leq k$, let $\e_i$ denote the vector in $\mathbb{R}^k$ that has a one at coordinate $i$ and zero everywhere else. $M_A$ denotes the integer valued matrix with $k$ columns obtained by listing as its rows  all vectors $\e_{i_1} + \e_{i_2} - \e_{i_3} - \e_{i_4}$ for which $a_{i_1} + a_{i_2} = a_{i_3} + a_{i_4}$ holds and for which we do not have $i_1 = i_2 = i_3 = i_4$. The dimension of $A$ can be derived from the rank of $M_A$ using the following result.
\begin{theorem}[Konyagin and Lev~\cite{KL2000}]\label{thm:kl}
	For any set $A \subseteq \mathbb{Z}^m$ we have
	\begin{equation}
		\dim(A) = |A| - 1 - \rm{rank}(M_A).
	\end{equation}
\end{theorem}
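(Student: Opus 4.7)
The plan is to reformulate the statement in terms of the kernel of $M_A$ and exploit duality. Let $K = \ker(M_A)$ viewed as a subspace of $\mathbb{Q}^k$. By rank-nullity, $\dim K = k - \text{rank}(M_A)$, so it suffices to prove that $\dim(A) = \dim K - 1$. Two observations drive the whole argument: first, $\mathbf{1} = (1, \ldots, 1) \in K$ trivially; second, the $m$ coordinate vectors of $A$ (each entry being one coordinate of some $a_i$) all lie in $K$, simply because $A$ satisfies its own additive relations.

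For the upper bound $\dim(A) \leq \dim K - 1$, I would take any $B = \{b_1, \ldots, b_k\} \subset \mathbb{Z}^d$ that is $F_2$-isomorphic to $A$ and not contained in a hyperplane. Each of the $d$ coordinate vectors $w^{(j)} \in \mathbb{Z}^k$ of $B$ lies in $K$ (since $B$ inherits every additive relation from $A$), and together with $\mathbf{1}$ they must be linearly independent: otherwise one would have an identity $\sum_j c_j b_{i,j} = t$ for all $i$, forcing the $b_i$'s into a single hyperplane. Thus $d + 1 \leq \dim K$.

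For the lower bound I would set $d = \dim K - 1$ and pick vectors $v^{(1)}, \ldots, v^{(d)} \in K \cap \mathbb{Z}^k$ so that $\mathbf{1}, v^{(1)}, \ldots, v^{(d)}$ form a $\mathbb{Z}$-basis of $K \cap \mathbb{Z}^k$ (this is possible since $\mathbf{1}$ is primitive in that lattice). Define $b_i = (v^{(1)}_i, \ldots, v^{(d)}_i) \in \mathbb{Z}^d$. There are then three things to verify: (a) the $b_i$'s are pairwise distinct, (b) the map $a_i \mapsto b_i$ is an $F_2$-isomorphism onto $B = \{b_i\}$, and (c) $B$ is not contained in a hyperplane. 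For (a) and (b), the crucial computation is that if $b_{i_1} + b_{i_2} = b_{i_3} + b_{i_4}$ then the vector $w = \e_{i_1} + \e_{i_2} - \e_{i_3} - \e_{i_4}$ is orthogonal to every $v^{(j)}$ and to $\mathbf{1}$, hence to the whole of $K$; in particular $w$ is orthogonal to the $m$ coordinate vectors of $A$, which forces $a_{i_1} + a_{i_2} = a_{i_3} + a_{i_4}$. Applying the same argument to $\e_i - \e_j$ yields distinctness of the $b_i$'s. Item (c) is immediate: a common hyperplane relation for the $b_i$'s would give a nontrivial identity $\sum_j c_j v^{(j)} = t\mathbf{1}$, contradicting our basis choice.

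The main obstacle is step (b) of the lower bound, namely ruling out that $B$ has extra $F_2$-relations beyond those of $A$. The key idea that makes this work is that the coordinate vectors of $A$ themselves lie in $K$, so an orthogonality-to-$K$ condition on $w$ automatically translates into an actual $F_2$-relation among the $a_i$'s. The remaining integrality considerations, namely choosing the $v^{(j)}$'s inside $\mathbb{Z}^k$ so that $B$ lands in $\mathbb{Z}^d$, are routine and follow from the integrality of $M_A$ together with the fact that $K \cap \mathbb{Z}^k$ is a saturated sublattice of $\mathbb{Z}^k$ of full rank $\dim K$.
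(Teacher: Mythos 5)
The paper does not prove this statement: it is quoted verbatim as an external result of Konyagin and Lev~\cite{KL2000}, so there is no in-paper argument to compare against. Your proposal is, on its own terms, a correct and complete proof along the standard lines of the original: identifying $\dim(A)+1$ with $\dim\ker(M_A)$, using that $\mathbf{1}$ and the coordinate vectors of $A$ lie in the kernel for the upper bound, and building a witness set $B$ from a $\mathbb{Z}$-basis of the saturated lattice $\ker(M_A)\cap\Z^k$ extending $\mathbf{1}$ for the lower bound. The key step --- deducing the reverse $F_2$-implication from the fact that a vector orthogonal to a basis of $K$ is orthogonal to the coordinate vectors of $A$, which themselves lie in $K$ --- is exactly what makes the construction an isomorphism rather than merely a homomorphism, and your treatment of it is sound.
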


Now let $A \subset \Z$ be a set which is the union of $s$ disjoint  segments as in \eqref{eq:ssegments}. 
Given such a set $A$, we denote by $S_A$ the integer valued matrix  with $s$ columns  obtained by listing in its rows all vectors $\e_{j_1} + \e_{j_2} - \e_{j_3} - \e_{j_4}$ for which $(A_{j_1} + A_{j_2}) \cap (A_{j_3} + A_{j_4} )\neq \emptyset$ and for which we do not have $j_1 = j_2 = j_3 = j_4$. We derive the following Corollary from Theorem~\ref{thm:kl}.

\begin{corollary} \label{cor:segdim}
	Any $A \subset \Z$ that is the union of $1 \leq s \leq |A| - 1$ disjoint segments satisfies
	\begin{equation}
		\dim(A) = s - \rm{rank}(S_A).
	\end{equation}
\end{corollary}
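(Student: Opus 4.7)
The plan is to apply Theorem~\ref{thm:kl} to $A$ and reduce the computation of $\rm{rank}(M_A)$ to that of $\rm{rank}(S_A)$ via the \emph{segment projection} $\pi\colon\R^k\to\R^s$ defined by $\pi(\e_i)=\e_j$ whenever $a_i\in P_j$. Any row $\e_{i_1}+\e_{i_2}-\e_{i_3}-\e_{i_4}$ of $M_A$ is sent by $\pi$ to $\e_{j_1}+\e_{j_2}-\e_{j_3}-\e_{j_4}$, which is either zero or a row of $S_A$ (the equality $a_{i_1}+a_{i_2}=a_{i_3}+a_{i_4}$ witnesses that $P_{j_1}+P_{j_2}$ and $P_{j_3}+P_{j_4}$ intersect); conversely every row of $S_A$ lifts to a row of $M_A$. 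Hence $\pi$ maps $V:=\rm{rowspan}(M_A)$ onto $W:=\rm{rowspan}(S_A)$, and by rank--nullity
\[
\rm{rank}(M_A)=\rm{rank}(S_A)+\dim(V\cap\ker\pi),
\]
so it suffices to prove $\dim(V\cap\ker\pi)=k-1-s$.

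Let $V_w\subseteq V$ be the rowspan of the \emph{within-pair} rows, namely those with $\{j_1,j_2\}=\{j_3,j_4\}$ as multisets. They are killed by $\pi$, giving $V_w\subseteq V\cap\ker\pi$. For the reverse inclusion I would exploit that every row of $M_A$ satisfies $\sum_i v_ia_i=0$, hence $V\subseteq\{v:\sum_iv_ia_i=0\}$. Writing $a_i=m_{j_i}+t_i$ with $m_{j_i}=\min P_{j_i}$ and $t_i=a_i-m_{j_i}$, for any $v\in V\cap\ker\pi$ one computes
\[
0=\sum_iv_ia_i=\sum_jm_j\,\pi(v)_j+\sum_iv_it_i=\sum_iv_it_i,
\]
so $V\cap\ker\pi\subseteq U:=\{v\in\R^k:\pi(v)=0,\ \sum_iv_it_i=0\}$. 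Because the paper's convention forces some $k_j>1$, the vector $(t_i)$ is not a linear combination of the indicators of $P_1,\dots,P_s$, so the $s+1$ linear forms cutting out $U$ are independent and $\dim U=k-s-1$.

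To close the argument I would use the auxiliary $F_2$-embedding $\phi\colon A\to\Z^{s+1}$ defined by $\phi(a)=(a-m_i,\e_i)$ for $a\in P_i$. The sumsets $\phi(P_i)+\phi(P_j)$ lie on the pairwise distinct affine lines $\Z\times\{\e_i+\e_j\}$ and are therefore pairwise disjoint; after rescaling to a set $\tilde A\subset\Z$ by choosing sufficiently spread base points for the $s$ segments, the discussion preceding the corollary gives $\dim\phi(A)=\dim\tilde A=s$. Since the rows of $M_{\phi(A)}$ are precisely the within-pair rows of $M_A$, Theorem~\ref{thm:kl} applied to $\phi(A)$ yields $\dim V_w=\rm{rank}(M_{\phi(A)})=k-1-s=\dim U$. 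The chain $V_w\subseteq V\cap\ker\pi\subseteq U$ thus consists of subspaces of equal dimension, so all inclusions are equalities. Plugging $\dim(V\cap\ker\pi)=k-1-s$ into the rank--nullity identity gives $\rm{rank}(M_A)=\rm{rank}(S_A)+k-1-s$, and a final application of Theorem~\ref{thm:kl} to $A$ concludes that $\dim(A)=k-1-\rm{rank}(M_A)=s-\rm{rank}(S_A)$.

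The main obstacle is the identification $V\cap\ker\pi=V_w$. The inclusion $V_w\subseteq V\cap\ker\pi$ is tautological, but for the reverse one must combine the ambient constraint $\sum v_ia_i=0$ (which encodes that $A$ lives in $\Z$) with the $s$ equations $\pi(v)=0$; the decomposition $a_i=m_{j_i}+t_i$ is the trick that extracts from them the single extra scalar equation $\sum v_it_i=0$. Matching dimensions then requires the $\phi$-embedding, which appears to be the shortest route to $\dim V_w=k-1-s$ without a direct and substantially messier rank computation on within-pair rows.
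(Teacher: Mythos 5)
Your proof is correct, but it takes a genuinely different route from the paper's. The paper proves the identity ${\rm rank}(M_A)=(|A|-2s+y)+(s-y-1)+{\rm rank}(S_A)$ by a direct case analysis on the segment pattern $(j_1,j_2,j_3,j_4)$ of each row of $M_A$, counting by hand how many independent relations each pattern contributes and which of them survive in $S_A$. You instead package the whole bookkeeping into the projection $\pi$ and rank--nullity, and then pin down $\dim(V\cap\ker\pi)$ by sandwiching it between $V_w$ and $U$, both of dimension $k-s-1$; the upper bound via the constraint $\sum_i v_ia_i=0$ and the decomposition $a_i=m_{j_i}+t_i$ is a nice touch that the paper has no analogue of. The one dependency worth flagging is your lower bound $\dim V_w\ge k-1-s$: you reduce it, via the embedding $\phi$ and a spread-out realization $\tilde A\subset\Z$, to the assertion that a union of $s$ segments whose pairwise sums $P_i+P_j$ are disjoint has dimension exactly $s$. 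The paper states this in the introduction (after \eqref{eq:doublingsegments}) but never proves it, and it is essentially the ${\rm rank}(S_A)=0$ case of the very corollary you are proving, so as written there is a whiff of circularity. It is easy to close: the inequality $\dim\tilde A\le s$ follows from the left-hand bound of \eqref{eq:T_bounds}, since $(d+1)k-\binom{d+1}{2}$ is increasing in $d$ for $d\le k-1$ and $|2\tilde A|=(s+1)k-\binom{s+1}{2}$; alternatively one can count the within-pair relations directly, which is exactly what the paper's Cases 1 and 2.1 do. With that one sentence added, your argument is complete, and arguably more transparent than the paper's about why the answer is ${\rm rank}(M_A)={\rm rank}(S_A)+k-1-s$; the paper's version, by contrast, is self-contained and elementary but leans on informal claims about which relations are "linearly dependent with these."
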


\begin{proof}
	Every row in $M_A$ is associated with up to four (not all equal) elements $a_{i_1},a_{i_2},a_{i_3},a_{i_4}$ such that $a_{i_1} + a_{i_2} = a_{i_3} + a_{i_4}$. Let $a_{i_1} \in P_{j_1}$, $a_{i_2} \in P_{j_2}$, $a_{i_3} \in P_{j_3}$ and $a_{i_4} \in P_{j_4}$ where  we may assume  $j_1 \leq j_2$ and $j_3 \leq j_4$ as well as $\min\{j_1,j_2\} \leq \min\{j_3,j_4\}$. Furthermore, let $0 \leq y = \# \{ j : |P_j| = 1 \} < s$ denote the number of segments that are singletons. We distinguish the following cases.
	
	{\bf Case 1.} $\# \{j_1,j_2,j_3,j_4\} = 1$, that is $j_1 = j_2 = j_3 = j_4 = j$ for some $1 \leq j \leq s$. Since $P_j$ is one dimensional, by Theorem~\ref{thm:kl}  there are a total of $\max \{|P_j| - 2,0\}$ linearly independent equations of this type for each $P_j$.  As these equations only involve elements in $P_j$ and the segments are disjoint, it is clear that each equation is linearly independent from those of other segments, so we get a total of $|A| - 2s + y$ linearly independent equations of this type in $M_A$.  On the other hand this case does not contribute to the rank  of $S_A$
	
	{\bf Case 2.} $\# \{j_1,j_2,j_3,j_4\} = 2$. We distinguish two further cases.
	
	{\bf Case 2.1.}  $j_1 = j_3 < j_4 = j_2$.  Segments of length one can only give trivial equations of this type not contributing to the rank of $M_A$. If $P_{i_0}<P_{i_1}<\cdots <P_{i_{s-y}}$ are  the  $s-y$ segments which are not singletons,  then equations of this type give us a total of $s - y - 1$ new linear independent ones on top of the ones given by Case 1, one for each pair $P_{i_0}, P_{i_j}$ and $j=2,\ldots s-y$, the remaining ones being linearly dependent with these. Moreover this case does not contribute to the rank of $S_A$. 
	
	{\bf Case 2.2.} $j_1<j_2=j_3=j_4$ or $j_1=j_2=j_3<j_4$. Each pair $P_{j}, P_{j'}$ with $j\neq j'$ for which an equation of this type  exists implies that $P_j\cap P_{j'}$ intersects either $2P_j$ or $2P_{j'}$ and   contributes one additional linear independent equation in $M_A$ on top of the above ones, and it contributes to one additional linear equation in $S_A$ as well. 
		
	{\bf Case 3.} $\# \{j_1,j_2,j_3,j_4\} \geq 3$, that is $j_1 < j_3 \leq j_4 < j_2$. This implies that $P_{j_1} + P_{j_2}$ intersects $P_{j_3} + P_{j_4}$. Each such intersection contributes with one additional linear independent equation in $M_A$ and also on $S_A$ on top of the above ones. 
		
	Taken together it follows that ${\rm{rank}}(M_A) = (|A| - 2s + y) + (s - y - 1) + {\rm{rank}}(S_A)$ and therefore, by Theorem~\ref{thm:kl},  $\dim(A) = s - {\rm{rank}}(S_A)$.
\end{proof}

We note that for $s \geq 6$ there are sets for which $k_i = 1$ for all $i$ that are not covered by Corollary~\ref{cor:segdim} or Conjecture~\ref{conj:segmentFreiman}.

\section{Proof of Proposition~\ref{prop:small_segments}} \label{sec:prop_proof}

Let $A \subset \Z$ again be a set  which is the union of $s$ segments. We denote the interval separating the two consecutive segments $P_i$ and $P_{i+1}$ by
	\begin{equation}
		L_i = [\max(P_i)+1,\min(P_{i-1})-1]
	\end{equation}
	and write $\ell_i = |L_i|$ for its cardinality. It follows that
	\begin{equation}
		\min P_i = \sum_{j<i} (k_j+l_j) \quad \text{and} \quad \max (P_i) = \min (P_i)+k_i-1.
	\end{equation}
	
In order to prove Proposition~\ref{prop:small_segments}, we will need the following lemma that gives us an inductive approach to Conjecture~\ref{conj:segmentFreiman}.

\begin{lemma}\label{lem:pipi+1} Let $A$ be a $1$--dimensional set of cardinality $k$ that is composed of $s \leq k-1$ disjoint segments $P_1,\dots,P_s$ such that $\max P_i < \min P_{i+1}$ for $1 \leq i < s$. If Conjecture~\ref{conj:segmentFreiman} holds for $s-1$ and 
	\begin{equation}
		\vol (A)>2^{s-1}(|A|-s)+1,
	\end{equation}
	then we must have
	\begin{equation}\label{eq:aiaj+1}
		P_i+P_j<P_i+P_{j+1}, \qquad  1\le i\le s,\; 1\le j<s.
	\end{equation}
\end{lemma}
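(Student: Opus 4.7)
My plan is to argue by contradiction, constructing from $A$ an auxiliary set with $s-1$ segments to which the assumed instance of Conjecture~\ref{conj:segmentFreiman} can be applied. Suppose there exist indices $i \in \{1,\ldots,s\}$ and $j \in \{1,\ldots,s-1\}$ for which $P_i + P_j \not< P_i + P_{j+1}$. The inequality $\max P_i + \max P_j \ge \min P_i + \min P_{j+1}$ together with $\min P_{j+1} - \max P_j = \ell_j + 1$ and $\max P_i - \min P_i = k_i - 1$ simplifies to $\ell_j \le k_i - 2$, and since the remaining $s-1$ segments contribute at least one element each we have $k_i \le k - s + 1$, producing the crucial bound
\begin{equation*}
\ell_j \le k - s - 1.
\end{equation*}

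Next, I form $A'$ from $A$ by translating $P_{j+1}, \ldots, P_s$ uniformly by $-\ell_j$. Because the shift is uniform on these segments and the first $j$ segments are left fixed, every gap other than $L_j$ is preserved, while $P_j$ and the translate of $P_{j+1}$ become adjacent and fuse into a single new segment. Thus $A'$ is a disjoint union of exactly $s-1$ segments with $|A'| = k$. Taking $A$ in normal form without loss of generality, I verify that $A'$ inherits normal form: $\min A' = 0$ is clear, and the merged segment contains the two consecutive integers $\min P_j$ and $\min P_j + 1$, which already forces $\gcd(A') = 1$. Consequently
\begin{equation*}
\vol(A') = \max A' + 1 = \max A - \ell_j + 1 = \vol(A) - \ell_j.
\end{equation*}

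Finally, applying the hypothesis that Conjecture~\ref{conj:segmentFreiman} holds for $s-1$ to $A'$ yields $\vol(A') \le 2^{s-2}(k - s + 1) + 1$; combined with the identity above and the bound on $\ell_j$ this produces
\begin{equation*}
\vol(A) = \vol(A') + \ell_j \le 2^{s-2}(k-s+1) + 1 + (k-s-1) \le 2^{s-1}(k-s) + 1,
\end{equation*}
where the final inequality is equivalent to $(k-s-1)(2^{s-2} - 1) \ge 0$, trivially true for $s \ge 2$ (the case $s = 1$ is vacuous). This contradicts the hypothesis $\vol(A) > 2^{s-1}(k-s) + 1$ and finishes the argument.

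The point I expect to require most care is the exact equality $\vol(A') = \vol(A) - \ell_j$ rather than merely $\vol(A') \le \vol(A) - \ell_j$: if the inequality were strict, the chain of estimates would only produce a lower bound on $\vol(A)$ and no contradiction. The observation that two consecutive integers always survive inside the merged segment is precisely what guarantees $\gcd(A') = 1$ and hence the required equality, making this the pivot on which the entire proof rests.
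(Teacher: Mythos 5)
Your construction is genuinely different from the paper's: where the paper passes to the superset $A \cup L_j$, filling the gap with $\ell_j$ new elements and keeping the volume fixed, you contract the gap by translating $P_{j+1},\dots,P_s$ by $-\ell_j$. Your closing arithmetic is correct, and the $\gcd$ point you flag is handled properly, but you have misidentified where the danger lies. The step that is actually unjustified is the tacit assumption that $A'$ is $1$--dimensional. You need this twice: once to write $\vol(A') = \max(A') + 1$ (that identity holds only for $1$--dimensional sets in normal form; for a $d$--dimensional set with $d \ge 2$ the volume is computed from convex hulls in $\Z^d$ and is in general far smaller than the diameter), and again to invoke Conjecture~\ref{conj:segmentFreiman} for $s-1$, whose hypothesis is precisely that the set be $1$--dimensional. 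If $\dim(A') \ge 2$, the conjecture gives no control on $\max(A')$, and your chain of inequalities collapses.

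The difficulty is that a contraction can destroy additive relations. By Corollary~\ref{cor:segdim}, $\dim(A') = (s-1) - {\rm rank}(S_{A'})$, and a relation $(P_a+P_b)\cap(P_c+P_d)\neq\emptyset$ of $A$ survives your contraction only when the two sums are translated by the same multiple of $\ell_j$, that is, when $\#\big(\{a,b\}\cap\{j+1,\dots,s\}\big) = \#\big(\{c,d\}\cap\{j+1,\dots,s\}\big)$; otherwise their relative position shifts by $\ell_j$ or $2\ell_j$ and the intersection may disappear, so ${\rm rank}(S_{A'})\ge s-2$ is not automatic. This is exactly what the paper's superset construction sidesteps: adding $L_j$ preserves every relation of $A$ (each $P_a+P_b$ is contained in the corresponding sum of merged segments), so the rank of the segment matrix drops by at most one while the number of segments drops by exactly one, and $A\cup L_j$ remains $1$--dimensional with $\vol(A\cup L_j)=\vol(A)$. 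Note also that the paper's route is direct rather than by contradiction on \eqref{eq:aiaj+1}: it deduces $\ell_j \ge |A|-s$ for every $j$ from the volume hypothesis alone and then reads off \eqref{eq:aiaj+1}. To repair your argument you would need to prove $\dim(A')=1$ under your standing hypotheses (which does not look routine), or simply replace the contraction by the superset $A\cup L_j$.
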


\begin{proof}
	We observe that, for each $1 \leq j < s$, the set $A \cup L_j$ is $1$--dimensional, consists of $s-1$ disjoint segments and has the same volume as $A$. By the assumption that Conjecture~\ref{conj:segmentFreiman} holds for $s-1$, we must have
	\begin{equation}
		\vol(A\cup L_j)\le 2^{s-2}(|A|+\ell_j-s+1)+1.
	\end{equation}
	Hence, our assumption on $\vol(A)$ implies that
	\begin{equation}\label{eq:ellj}
		\ell_j\ge |A|-s\ge k_i+1 \quad \text{ for }1\le i\le s,\; 1\le j<s.
	\end{equation}
	In particular, we have
	\begin{align*}
		\min (P_i+P_{j+1})-\max (P_i+P_{j})=&(\min P_{j+1}-\max P_j)-(\max P_i-\min P_i)\\
		=&\ell_j-k_i+1> 0,
	\end{align*}
	which implies the desired statement \eqref{eq:aiaj+1}.
\end{proof}

It follows that, under the hypothesis of Lemma~\ref{lem:pipi+1}, the only possible intersections between sums of two segments  are of the form
\begin{equation}\label{eq:iji'j'}
	(P_i+P_j)\cap (P_{i'}+P_{j'}) \quad \text{where} \quad i < i' \leq j' < j.
\end{equation}
By using \eqref{eq:iji'j'} we next prove Proposition~\ref{prop:small_segments}.

\begin{proof}[Proof of Proposition~\ref{prop:small_segments}]
	For $s=1$ the conjecture trivially holds.
	
	For $s = 2$, suppose that $A$ is composed of two segments and $\vol (A)>2k-2$. By Lemma~\ref{lem:pipi+1} we have $2P_1<P_1+P_2<2P_2$ and therefore ${\rm rank} (S_A) = 0$, so that $A$ must be $2$--dimensional by Corollary~\ref{cor:segdim}.
	
	Suppose now that $s=3$ and that $\vol(A)>4k-10$. By \eqref{eq:iji'j'} the only possible intesections of sums of segments are $P_1+P_3$ and $P_2$. Again, $A$ must be $2$--dimensional by Corollary~\ref{cor:segdim}.
	
	Finally, suppose that $s=4$ and $\vol (A) > 7k-30$.  By \eqref{eq:iji'j'} the only possible intersections of sets of the type $P_i + P_j$ are 
	\begin{enumerate}
		\item $P_1+P_3$ can intersect with $2P_2$,
		\item $P_1+P_4$ can only intersect with at most one of $2P_2$, $P_2+P_3$ or $2P_3$,
		\item $P_2+P_4$ can only intersect with $2P_3$.
	\end{enumerate}
	By Corollary~\ref{cor:segdim} the only case of interest is if three intersections occur, so let us distinguish the following two cases.
	
	{\bf Case 1.} $P_1+P_4$ intersects $P_2+P_3$. We note that the vector $(1,-1,-1,1)$ can be written as the sum of $(1,-2,1,0)$ and $(0,1,-2,1)$ and therefore Corollary~\ref{cor:segdim} again implies that $A$ is $2$--dimensional.
		
	{\bf Case 2.} $P_1+P_4$ intersects $2P_2$ or $2P_3$. It is clear that these cases are identical by symmetry, so let us assume the former. We must have
	\begin{align*}
	P_1+P_4 \cap 2P_2 \neq \emptyset   & \quad \Leftrightarrow \quad k_1 + k_2 + \ell_1 \geq k_3 + \ell_2 + \ell_3 + 2, \\
	P_1+P_3 \cap 2P_2 \neq \emptyset  & \quad \Leftrightarrow \quad k_2 + k_3 + \ell_2 \geq \ell_1 + 2, \\
	P_2+P_4 \cap 2P_3 \neq \emptyset & \quad \Leftrightarrow \quad k_3 + k_4 + \ell_3 \geq \ell_2 + 2.
	\end{align*}
	Combining the first two inequalities gives $\ell_3 \leq k_1 + 2k_2 - 4$ which combined with the third inequality gives $\ell_2 \leq |A| + k_2 - 6$ which inserted into the second inequality gives $\ell_1 \leq |A| + 2k_2 + k_3 - 8$. Taken together this would imply
	\begin{equation*}
	\ell = \ell_1 + \ell_2 + \ell_3 \leq 2|A| + k_1 + 5k_2 + k_3 - 18 \leq 7|A| - 31
	\end{equation*}
	in contradiction to the assumption that $\ell=\ell_1+\ell_2+\ell_3 \geq 7|A| - 30$.
\end{proof}

\section{Proof of Theorem~\ref{thm:3seg}} \label{sec:thm_proof}

We quote explicitly the so--called $(3k-4)$--Theorem of Freiman~\cite{Freiman73} mentioned in the Introduction which will be used throughout the proof.

\begin{theorem}[Freiman]\label{thm:3k-4} Let $A\subset \Z$ in normal form with $a=\max (A)$ and $k=|A|$. We have
$$
|2A|\ge \min \{ k+a, 3k-3\}.
$$
\end{theorem}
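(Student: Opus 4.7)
The plan is to split the bound $\min(k+a,\, 3k-3)$ at its natural breakpoint $a = 2k-3$. Order $A = \{0 = a_0 < a_1 < \cdots < a_{k-1} = a\}$; since $0, a \in A$ we have $0, a, 2a \in 2A$ automatically, so all holes of $2A$ lie in $(0,a) \cup (a, 2a)$. The base cases $k \leq 2$ are immediate from the normal-form conditions ($A = \{0\}$ or $A = \{0,1\}$).

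For the \emph{dense regime} $a \leq 2k-3$ (target $|2A| \geq k+a$), I will use a direct hole-counting argument. The desired bound is equivalent to $|[0, 2a] \setminus 2A| \leq |[0, a] \setminus A|$. For each hole $m$ of $2A$ with $m < a$ we have $m \notin A$ (since $m = 0 + m$ would place $m$ in $2A$), and for each hole $m > a$ we have $m - a \notin A$ (since $m = (m-a) + a$ would place $m$ in $2A$). This defines a natural map from holes of $2A$ into holes of $A$, and the key claim is that this map is injective in this regime. Indeed, if both $h$ and $h+a$ were holes of $2A$ for some $h \in (0,a) \setminus A$, then no two elements of $S := A \cap [0,h]$ could sum to $h$, and (checking that any representation $a_i + a_j = h+a$ with $a_i, a_j \in A$ forces $a_i, a_j > h$) no two elements of $T := A \cap [h,a]$ could sum to $h+a$. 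Standard Sidon-style bounds ($S$ and $h - S$ are disjoint subsets of $[0,h]$; $T - h$ and $(a-h) - (T - h)$ are disjoint subsets of $[0, a-h]$) then give $|S| \leq \lfloor (h+1)/2 \rfloor$ and $|T| \leq \lfloor (a-h+1)/2 \rfloor$. Since $h \notin A$, the sets $S$ and $T$ partition $A$, yielding $k \leq \lfloor (a+2)/2 \rfloor$, which forces $a \geq 2k-2$ and contradicts $a \leq 2k-3$.

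For the \emph{sparse regime} $a \geq 2k-3$ (target $|2A| \geq 3k-3$), the hole-counting argument degrades and I instead induct on $k$. Set $A' = A \setminus \{a\}$ with $\max A' = a_{k-2} \geq k-2$, and let $d := \gcd(a_1, \ldots, a_{k-2})$. When $d > 1$, the rescaled set $A'/d$ is in normal form with $\max(A'/d) = a_{k-2}/d \geq k-2$; induction on it yields $|2A'| \geq \min(k-1 + a_{k-2}/d,\, 3k-6) \geq 2k-3$, and the coprimality $\gcd(a,d) = 1$ forced by $\gcd(A) = 1$ ensures $A + a$ is disjoint from $2A' \subseteq d\Z$, giving $|2A| \geq |2A'| + k \geq 3k-3$. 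When $d = 1$, direct induction on $A'$ yields $|2A'| \geq \min(k-1 + a_{k-2},\, 3k-6)$; the case $a > 2a_{k-2}$ then gives $(A+a) \cap 2A' = \emptyset$ and the bound follows at once, while the case $a \leq 2a_{k-2}$ requires a careful count of the overlap $|(A+a) \cap 2A'|$.

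The principal technical obstacle is precisely this last subcase ($d = 1$, $a \leq 2a_{k-2}$ in the sparse regime), where one needs a lower bound on $|(A+a) \setminus 2A'|$ that combines with the inductive bound on $|2A'|$. Freiman's original treatment in~\cite{Freiman73} handles this through a further split according to whether $A'$ lies in its own dense or sparse sub-regime (relative to the parameters $k-1$ and $a_{k-2}$); in either situation the inductive bounds suffice to yield $|2A| \geq 3k-3$, but the bookkeeping of the overlap is the crux of the argument.
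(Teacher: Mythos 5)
The paper does not prove this statement at all: it is quoted verbatim from Freiman's monograph \cite{Freiman73}, so your proposal has to be judged on its own terms rather than against an in-paper argument. Your dense regime $a\le 2k-3$ is complete and correct: the map sending a hole $m<a$ of $2A$ to $m$ and a hole $m>a$ to $m-a$ does land in $[0,a]\setminus A$, and your Sidon-type estimate $k=|S|+|T|\le \lfloor (h+1)/2\rfloor+\lfloor (a-h+1)/2\rfloor\le a/2+1$ correctly shows that a collision (both $h$ and $h+a$ missing from $2A$) forces $a\ge 2k-2$, so the map is injective and $|2A|\ge k+a$ follows. This is essentially the classical argument and is fully executed.

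The sparse regime $a\ge 2k-3$, however, is not actually proved. The cases $d>1$ and ($d=1$, $a>2a_{k-2}$) are handled correctly (modulo the small point that the element $2a\in A+a$ need not avoid $d\Z$; it must instead be discarded because it exceeds $\max 2A'=2a_{k-2}$), but for the remaining subcase $d=1$, $a\le 2a_{k-2}$ you only announce that a ``careful count of the overlap'' is needed and defer entirely to Freiman's original treatment. That count is precisely the substance of the $3k-3$ half of the theorem: the inductive bound $|2A'|\ge\min\{k-1+a_{k-2},3k-6\}$ together with $|A+a|=k$ gives nothing until $|(A+a)\cap 2A'|$ is bounded, and no such bound is supplied. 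As written, the proposal therefore establishes only the $k+a$ half of the inequality and asserts, rather than proves, the $3k-3$ half. If you want a self-contained finish, the cleanest known route (Lev and Smeliansky \cite{LevSmel1995}) reads $A$ modulo $a$ and applies Kneser's theorem in $\Z/a\Z$, using $\gcd(A)=1$ to exclude nontrivial stabilizers; this bypasses the overlap bookkeeping entirely.
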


There are several versions of the above Theorem for the sum of distinct sets due to Freiman~\cite{Freiman73},  Lev and Smeliansky~\cite{LevSmel1995} and  Stanchescu~\cite{Stanchescu1996}.  We will use  the following slightly weaker form of the one  by Lev and Smeliansky~\cite{LevSmel1995}.

\begin{theorem}[Lev and Smelianski] \label{thm:ls}
	Any two finite sets $A, B \subset \Z$ in normal form with $\max (A) > \max (B)$ satisfy
	\begin{equation}
		|A+B|\ge \min\{ |A|+2|B|-2, \max (A)+|B|\}.
	\end{equation}
\end{theorem}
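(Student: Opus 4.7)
My plan is to construct an explicit chain of distinct elements in $A+B$ of cardinality $|A|+2|B|-2-s$, where $s = |A \cap B| - 1$, and then to close the shortfall by either producing additional sums from inside $A \cap B$ or by invoking Theorem~\ref{thm:3k-4} to push into the regime where the second bound $\max(A)+|B|$ dominates.

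Write $A = \{0 = a_0 < a_1 < \dots < a_{k-1} = a\}$ and $B = \{0 = b_0 < b_1 < \dots < b_{m-1} = b\}$ with $a > b$. Consider the subset
\[
Z := B \cup (A \setminus \{0\}) \cup \{a + b_j : 1 \leq j \leq m-1\}
\]
of $A + B$. Its three pieces lie in the intervals $[0, b]$, $[1, a]$, and $[a + b_1, a + b]$ respectively; the hypothesis $a > b$ makes the last piece disjoint from the first two, and the only overlap between the first two is $(A \cap B) \setminus \{0\}$, of size $s$. Hence $|Z| = m + (k-1) + (m-1) - s = |A| + 2|B| - 2 - s$. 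If $s = 0$ the first bound follows immediately, and more generally the theorem holds whenever $|Z| \geq \min\{|A|+2|B|-2, \max(A)+|B|\}$, i.e. whenever $s = 0$ or $s \leq k + m - 2 - a$.

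Suppose $s \geq 1$ and the chain $Z$ falls short of the target. Enumerate $A \cap B$ as $0 = c_0 < c_1 < \dots < c_s$. For each $1 \leq j \leq s$ the element $2c_j = c_j + c_j$ lies in $A + B$, and it belongs to $Z$ only when $2c_j \in A \cup B$ or $2c_j - a \in B \setminus \{0\}$. I plan to show that the number of indices $j$ for which $2c_j$ is new suffices to close the shortfall, except in a structurally rigid case in which many $2c_j$'s lie inside $A \cup B$. In that rigid case the forced three-term pattern $c_j, c_{j'}, 2c_j$ inside $A$ (or $B$) makes $|2A|$ close to its minimum value $2k-1$, so Theorem~\ref{thm:3k-4} forces $A$ to be close to an arithmetic progression, whence $\max(A) = a$ is small. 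Then the second bound $a + m$ becomes the binding one and is already dominated by $|Z|$.

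The main obstacle is the bookkeeping of which sums $c_i + c_j$ (not only the diagonal $i = j$) lie outside $Z$, and how the count interacts with the second bound $a + m$. Since $|2(A \cap B)| \geq 2s + 1$ there is a natural buffer, but making the argument rigorous will require a careful application of Theorem~\ref{thm:3k-4}, likely applied not directly to $A$ but to $A \cap B$ (in normal form after possibly dividing by a common factor) to control the length of $2(A \cap B)$, and then comparing against the interval structure of $Z$.
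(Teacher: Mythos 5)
The paper does not actually prove this statement: it is quoted verbatim from Lev and Smeliansky, whose argument reduces $A+B$ modulo $a=\max(A)$, bounds the sumset of the projections in $\Z_a$ by a Kneser/Freiman-type theorem, and then adds back the $|B|$ residue classes that are represented twice in $A+B$ (once by $b_j\in B=0+B$ and once by $a+b_j$). Your opening construction is correct as far as it goes: $Z=B\cup(A\setminus\{0\})\cup\{a+b_j:j\ge 1\}$ does lie in $A+B$, the hypothesis $\max(B)<a$ keeps the third block disjoint from the first two, and $|Z|=|A|+2|B|-2-s$ with $s=|A\cap B|-1$, which settles the cases $s=0$ and $s\le |A|+|B|-2-a$.

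The gap is the dichotomy you propose for closing the remaining shortfall of $s$. It is not true that either enough sums $c_i+c_j$ with $c_i,c_j\in A\cap B$ fall outside $Z$, or else $|2A|$ is forced near $2|A|-1$ so that Theorem~\ref{thm:3k-4} makes $a$ small and the second bound binding. Take $B=[0,m-1]$ and $A=[0,2m-2]\cup\{a\}$ with $a$ arbitrarily large. Here $A\cap B=B$, $s=m-1$, and $2(A\cap B)=[0,2m-2]\subseteq A\subseteq Z$, so \emph{no} sum formed from elements of $A\cap B$ --- diagonal or otherwise --- contributes anything new; at the same time $a$ is unbounded, so the bound $a+|B|$ is irrelevant, and $|2A|=3|A|-3$, so Theorem~\ref{thm:3k-4} gives no structural rigidity whatsoever. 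The theorem still holds for this pair, with equality $|A+B|=4m-2=|A|+2|B|-2$, but the $s$ missing elements are $[2m-1,3m-3]$, i.e.\ sums $a_i+b_j$ with $a_i\in A\setminus B$, which your plan never generates. So the shortfall cannot in general be recovered from inside $A\cap B$, and the intended fallback does not engage. To repair the argument you would have to account for sums straddling $A\setminus B$ and $B$; the cleanest way to do that is the classical reduction modulo $\max(A)$ sketched above rather than an element-counting argument inside $\Z$.
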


We make also use of the following result by Freiman~\cite{Freiman73}.

\begin{theorem}\label{thm:freiman10/3} Let $A$ be a two dimensional set of cardinality $k>6$ with $|2A| = 3|A| - 3 + b$. If $A$ can not be covered by a set consisting of two lines with volume at most $k+b$ then $b \geq |A|/3 - 2$.
\end{theorem}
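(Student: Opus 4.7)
The plan is to prove the contrapositive: assuming $b < k/3 - 2$, exhibit a covering of $A$ by two parallel line segments of total one-dimensional volume at most $k+b$. I would begin by choosing a primitive direction $v \in \Z^2$ that minimizes the number $m$ of lines orthogonal to $v$ meeting $A$; since $A$ is two-dimensional, $m \geq 2$, and the argument splits according to whether $m \geq 3$ or $m = 2$.

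To rule out $m \geq 3$, let $R_1, \ldots, R_m$ denote the fibers, of sizes $r_1, \ldots, r_m$ summing to $k$, placed at heights $y_1 < \cdots < y_m$. Each sumset $R_i + R_j$ sits on the line at height $y_i + y_j$, so overlaps between distinct index pairs arise only from 3-term arithmetic progressions among the $y_i$'s. Using $|R_i + R_j| \geq r_i + r_j - 1$ and $|2R_i| \geq 2r_i - 1$ on each contributing line, a direct count yields $|2A| \geq 4k - 6$ when no such progression occurs. In the remaining case $m = 3$ with $y_1, y_2, y_3$ in arithmetic progression, the unique collision at height $2y_2 = y_1 + y_3$ contributes at least $\max\{|2R_2|, |R_1 + R_3|\}$, and splitting on whether $r_2 \geq k/3$ (so $|2R_2|$ dominates) or $r_2 < k/3$ (so $|R_1+R_3|$ dominates) still yields $|2A| \geq 10k/3 - 5$. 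The case $m \geq 4$ gives a strictly stronger bound. Either way this contradicts $b < k/3 - 2$ for $k > 6$, so $m = 2$.

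With $m = 2$, one has $A \subset L_1 \cup L_2$ for two parallel lines, with $R_i = A \cap L_i$ of size $r_i$ and $r_1 + r_2 = k$. The three sumsets $2R_1$, $R_1 + R_2$, $2R_2$ lie on three distinct parallel lines of $\Z^2$, so
\begin{equation*}
|2A| = |2R_1| + |R_1 + R_2| + |2R_2|.
\end{equation*}
Writing $|2R_i| = 2r_i - 1 + b_{ii}$ and $|R_1 + R_2| = k - 1 + b_{12}$ (all nonnegative), one has $b_{11} + b_{22} + b_{12} = b$. When $b_{ii} \leq r_i - 2$ for both $i$, Theorem~\ref{thm:3k-4} applied to the one-dimensional normalization of each $R_i$ yields $\vol(R_i) \leq r_i + b_{ii}$; summing gives $\vol(R_1) + \vol(R_2) \leq k + b_{11} + b_{22} \leq k + b$, furnishing the desired two-line cover.

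The main obstacle is the degenerate subcase where $b_{ii} > r_i - 2$ for some $i$, say $i = 1$: this forces $r_1 \leq b + 1 < k/3$ and renders Theorem~\ref{thm:3k-4} useless for bounding $\vol(R_1)$, while the large fiber still enjoys $\vol(R_2) \leq r_2 + b_{22}$ since $r_2 \geq k - b - 1$ is much larger than $b + 2$. Here I would invoke Theorem~\ref{thm:ls} on the horizontal projections of $R_1$ and $R_2$ put in normal form: taking whichever of $R_1, R_2$ has the larger volume to play the role of $A$ in Theorem~\ref{thm:ls}, the inequality combined with $|R_1 + R_2| = k - 1 + b_{12}$ yields either $\vol(R_i) \leq r_i + b_{12}$, which immediately gives the required cover bound $\vol(R_1) + \vol(R_2) \leq k + b$, or forces $b_{12} \geq r_2 - 1$, directly contradicting $b < k/3 - 2$ for $k > 6$. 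Exhausting these sub-subcases completes the proof.
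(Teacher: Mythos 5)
You should first note that the paper itself contains no proof of this statement: Theorem~\ref{thm:freiman10/3} is quoted from Freiman's monograph \cite{Freiman73} and used as a black box, so there is no internal argument to compare against. Judged on its own merits, your outline follows the natural strategy (reduce to two fibers in a minimizing direction, then run one-dimensional inverse theorems on the fibers), but it contains two genuine gaps.

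The first is the reduction to $m=2$. Your claim that overlaps among the lines carrying the sets $R_i+R_j$ arise only from $3$-term arithmetic progressions among the $y_i$ already fails for $m\ge 4$ (take $y_1+y_4=y_2+y_3$ with no $3$-AP), and, more seriously, the assertion that ``the case $m\ge 4$ gives a strictly stronger bound'' is unsupported and cannot be obtained by the direct count you describe. For four equally spaced fibers, each a segment of length $k/4$, the height-by-height count gives exactly $7(k/2-1)=\tfrac{7}{2}k-7$, which is \emph{smaller} than $\tfrac{10}{3}k-5$ when $k<12$; and without structural information on the fibers the naive bound for $m=4$ can degrade to roughly $2k$ (e.g.\ when the two extreme fibers are singletons). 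Establishing $|2A|\ge \tfrac{10}{3}k-5$ for sets that need at least three parallel lines in every direction is essentially the content of Freiman's covering lemma, later sharpened by Stanchescu to $|2A|\ge(4-\tfrac{2}{m})k-(2m-1)$, and it requires an induction or a substantially more careful argument than a per-height count.

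The second gap is in the degenerate subcase of $m=2$. Theorem~\ref{thm:ls} bounds $\max(A)$ only for the set playing the role of $A$, i.e.\ the fiber of larger diameter. Your argument therefore closes only when the problematic fiber $R_1$ (the one with $b_{11}$ too large for Theorem~\ref{thm:3k-4}) happens to have the larger diameter. If instead $\vol(R_1)\le\vol(R_2)$, both branches of Theorem~\ref{thm:ls} either bound $\vol(R_2)$ (which you already control) or give $b_{12}\ge r_1-1$ (no contradiction, since $r_1\le b+1$ may be tiny); neither yields the needed estimate $\vol(R_1)\le r_1+b_{11}+b_{12}$. The configuration $R_1=\{0,1,N\}$ on one line and $R_2=[0,r_2-1]$ on the other, with $N\le r_2-1$, lands exactly in this uncovered situation. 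Two smaller points: the threshold for invoking Theorem~\ref{thm:3k-4} should be $b_{ii}\le r_i-3$ rather than $r_i-2$ (at $b_{ii}=r_i-2$ the example $\{0,1,N\}$ shows $\vol(R_i)$ is unbounded), and normalizing $R_1$ and $R_2$ separately can change $|R_1+R_2|$ when their difference sets have different gcds, so Theorem~\ref{thm:ls} must be applied after a common normalization.
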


We will also use the following Lemma which handles the case of two segments.

\begin{lemma}\label{lem:2segments} Let $A$ be an extremal set with cardinality $k$ composed by two segments. Then $A$ is isomorphic to one of the following sets:
\begin{enumerate}
	\item[(i)] $[0,k+b-1]\setminus [1,b]$, with $\dim (A)=1$, $\vol (A)=k+b$  and $|2A|=2k-1+b$ for some $1\le b\le k-3$, or
	\item[(ii)] $([0,k_1-1]\times \{0\})\cup ([0,k_2-1]\times \{1\})$ with $k_1+k_2=k$, $k_1,k_2\ge 1$, $\dim (A)=2$, $\vol (A)=k$  and  and $|2A|=3k-3$. 
\end{enumerate}
\end{lemma}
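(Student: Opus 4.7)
The plan is to begin by writing $A=P_1\cup P_2$ in normal form with $P_1=[0,k_1-1]$ and $P_2=[k_1+\ell,k_1+\ell+k_2-1]$, where $\ell\ge 1$ and $k_1+k_2=k$, assuming without loss of generality $k_1\le k_2$ (otherwise pass to the reflection $A^{-}$). By Corollary~\ref{cor:segdim} the dimension of $A$ equals $2-\mathrm{rank}(S_A)$, so $\dim(A)\in\{1,2\}$ depending on whether any two of the sum-sets $2P_1$, $P_1+P_2$, $2P_2$ overlap. Note that $2P_1\cap 2P_2=\emptyset$ always, since $\max(2P_1)=2k_1-2<2k_1+2\ell=\min(2P_2)$, so only the pairs $(2P_1,P_1+P_2)$ and $(P_1+P_2,2P_2)$ can contribute, doing so precisely when $\ell\le k_1-2$ and $\ell\le k_2-2$ respectively.

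In the case $\dim(A)=2$, neither intersection occurs, and a direct count gives $|2A|=(2k_1-1)+(k-1)+(2k_2-1)=3k-3$. The case $c=1$, $b=0$, $d=2$ of Conjecture~\ref{conj:main} is case (5) of the Introduction (Stanchescu), so $\vol(k,3k-3,2)=k$ and extremality forces $\vol(A)=k$. I would then exhibit the explicit $F_2$-isomorphism $\phi\colon A\to B:=([0,k_1-1]\times\{0\})\cup([0,k_2-1]\times\{1\})$ sending the two segments order-preservingly to their planar lifts. Since in both $A$ and $B$ the three sum-sets $2P_1$, $P_1+P_2$, $2P_2$ are pairwise disjoint (on three distinct horizontal lines in $B$), the only additive quadruples on either side are intra-segment, and these are preserved by $\phi$. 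The convex hull of $B$ (a trapezoid between $y=0$ and $y=1$) contains exactly $k_1+k_2=k$ lattice points, so $\vol(A)\le k$; together with extremality this yields case~(ii).

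In the case $\dim(A)=1$ at least one intersection occurs, leading to two subcases. If only $(P_1+P_2)\cap 2P_2\ne\emptyset$ (that is, $k_1-1\le\ell\le k_2-2$), then $|2A|=2k+k_1+\ell-2=2k-1+b$ with $b=k_1+\ell-1$; if both intersections occur (forcing $\ell\le k_1-2$, hence $k_1\ge 3$), then $|2A|=2k+2\ell-1=2k-1+b$ with $b=2\ell$. In either subcase $|2A|\le 3k-4$, so case (1) of the Introduction (the $(3k-4)$-theorem) gives $\vol(k,2k-1+b,1)=k+b$, while $A$ in normal form satisfies $\vol(A)=\max(A)+1=k+\ell$. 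Extremality therefore demands $\ell=b$: in the one-overlap subcase this forces $k_1=1$, yielding exactly case~(i) with $b=\ell\in[1,k-3]$; in the two-overlap subcase it forces $\ell=0$, contradicting $\ell\ge 1$, so no extremal set arises there.

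The only genuinely delicate step is the verification in case~(ii) that $\phi$ is an honest $F_2$-isomorphism, where one must confirm that no cross-segment additive quadruple appears on either side and that the canonical 2D embedding is convex-hull-minimal. All remaining steps amount to reading off the parameter $b$ in the parametrisation $|2A|=2k-1+b$ and comparing $\vol(A)=k+\ell$ with the known value of $\vol(k,T,1)$ coming from the $(3k-4)$-theorem.
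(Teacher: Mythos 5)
Your proposal is correct and follows essentially the same route as the paper: Corollary~\ref{cor:segdim} gives the dimension dichotomy, the two--dimensional case is read off directly, and in the one--dimensional case extremality together with the known value $\vol(k,2k-1+b,1)=k+b$ from the $(3k-4)$ range forces $k_1=1$. Your version is somewhat more explicit about the overlap case analysis and the $F_2$--isomorphism in case (ii), but the underlying argument is the same.
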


\begin{proof} If $\dim (A)=2$, then there must be no relation in the matrix $S_A$ in Corollary~\ref{cor:segdim} and we are led to Case (ii). 

Suppose that $\dim (A)=1$ and $A=P_1\cup P_2$, say  $P_1=[0,k_1-1]$ and $P_2=[k_1+\ell_1,k_1+\ell_1+k_2-1]$ for some $k_1, k_2\ge 1$ and $k=k_1+k_2$ and $\ell_1\ge 1$. Since $dim (A)=1$, we may also assume that $ (P_1+P_2)\cap 2P_2\neq \emptyset$, so that $2A$ consists of the interval $[0,2(k+\ell_1-1)]$ with a hole of some length $h\ge 0$. Since $A$ is extremal and has volume $vol(A)=k+\ell_1$ we have  $|2A|=2k-1+\ell_1$, so that $h=\ell_1$. Therefore
$$
\ell_1=\min(P_1+P_2)-\max (2P_1)-1=\ell_1+1-k_1,
$$
which implies $k_1=1$ and gives Case (i). 
\end{proof}

\begin{proof}[Proof of Theorem~\ref{thm:3seg}]
	Let $A$ consist of three segments $P_1$, $P_2$, and $P_3$, separated by intervals of holes $L_1$ and $L_2$. We consider three cases according to the dimension of $A$.
	
	{\bf Case~1.}  $\dim (A)=3$. By Corollary~\ref{cor:segdim} we must have ${\rm rank} (S_A) = 0$, that is all $P_i + P_j$ are disjoint for $1 \leq i,j \leq 3$, and we are led to Case~(iv) of the Theorem.
	
	{\bf Case~2.}  $\dim (A)=2$. It follows from Corollary~\ref{cor:segdim} that the matrix $S_A$  has ${\rm rank} (S_A) = 1$.  Up to isomorphisms we have two possibilities for the only independent relation in $S_A$.
	
	{\bf Case 2.1:} $(P_1+P_3)\cap 2P_2\neq \emptyset$. In this case we may assume that
	$$
	P_1=\{(0,0),\dots,(0,k_1-1)\},\; P_2=\{(1,0),\dots,(1,k_2-1)\}\; \text {and}\;  P_3=\{(2,\ell),\dots,(2,\ell+k_3-1)\}
	$$
	for some $\ell \in \N_0$. We know that $|2A| = 4k-6 - |(P_1+P_3) \cap 2P_2|$ so that $b = k-3- |(P_1+P_3) \cap 2P_2|$. We also have $\vol(A) = k + \max(\lfloor (k_1+\ell+k_3)/2 \rfloor -k_2,0)$. Since $\dim(A) = 2$ we must have $|(P_1+P_3) \cap 2P_2| > 0$ and therefore $\ell \leq 2k_2 - 2$. Now if $0 \leq \ell \leq 2k_2 - k_1 - k_3$ then $b = k-3 - (k_1+k_3-1) = k_2-2$ and $\vol(A) = k$. If $A$ is extremal, we therefore have $k_2 = 2$ so that $k_1+k_3 \leq 4$ and hence $k \leq 6$. If $\max(2k_2 - k_1 - k_3,0) < \ell \leq 2k_2 - 2$ then $b = k-3 - (2k_2 - 2 - \ell + 1) = k-2 + 2k_2 - \ell > \max(\lfloor (k_1+\ell+k_3)/2 \rfloor -k_2,0)$ so the set cannot be extremal. 
	
{\bf Case 2.2:}	$2P_1\cap (P_1+P_2) \neq \emptyset$. The case $(P_1+P_2) \cap 2P_2 \neq \emptyset$ works likewise. We may assume that
$$
P_1=  \{(0,0), (0,1),\cdots , (0,k_1-1)\},\; P_2=	\{(0,k_1+\ell_1), (0,k_1+\ell_1+1),\ldots ,(0,k_1+\ell_1+k_2-1)\},
$$
with $k_1\ge \ell_1+2$, and
$$
P_3=\{(1,0),\ldots ,(1,k_3-1)\}.
$$
	Let $A_0=P_0 \cup P_1$ and $A_1=P_2$, so that
	\begin{equation*}
		2A = 2A_0 \cup (A_0+A_1) \cup 2A_1,
	\end{equation*}
	the union being disjoint. We have $|2A_1| = 2k_3-1$ and, by Theorem~\ref{thm:3k-4}, we also have $|2A_0| \geq 2(k_1+k_2)-1+\ell_1$. Moreover, it can be readily checked that 
	$$
	|A_0+A_1|=\begin{cases} k_1+\ell_1 + k_2 + (k_3-1) = k + \ell_1 - 1, & \text{if} \; k_3>\ell_1+1,\\ (k_1+k_3-1) + (k_2+k_3-1) = k + k_3 - 2, & \text{otherwise}.\end{cases}
	$$
	It follows that 
	\begin{equation}\label{eq:case22}
	|2A| \geq 3k-3 + \ell_1 + \min\{k_3-1,\ell_1\}.
	\end{equation}
	As $\vol(A) = k + \ell_1$, the set can only be extremal if  $\min(k_3-1,\ell_1) = 0$, which implies $k_3=1$ (as $\ell_1\ge 1$) and there is equality in \eqref{eq:case22}, namely, if  $|2A_0| = 2(k_1+k_2) - 1 + \ell_1$. Applying Lemma~\ref{lem:2segments} to $A_0$ leads to Case (iii) of the Theorem. 

	{\bf Case~3.} $\dim (A)=1$. We recall the notation
	\begin{equation}
		|P_1|=k_1,\quad |P_2|=k_2,\quad |P_3|=k_3,\quad |L_1|=\ell_1,\quad |L_2|=\ell _2,
	\end{equation}
	and
 	\begin{equation}
 		a = \max (A)=k+\ell-1.
 	\end{equation}
	where $\ell = \ell_1+\ell_2$. The six segments in $2A$ are detailed below for further reference:
	\begin{align*}
		2P_1 &=  [0,2k_1-2],\\
		P_1+P_2 &= (k_1+\ell_1)+[0,k_1+k_2-2],\\
		2P_2 &= 2(k_1+\ell_1)+[0,2k_2-2],\\
		P_1+P_3 &= k_1+\ell_1+k_2+\ell_2+[0,k_1+k_3-2],\\
		P_2+P_3 & = 2(k_1+\ell_1)+(k_2+\ell_2)+[0,k_2+k_3-2],\\
		2P_3 &= 2(k_1+\ell_1+k_2+\ell_2)+[0,2k_3-2].
	\end{align*}
	Since $A$ is extremal, we have 
	\begin{equation}\label{eq:lba}
		a \geq 2(k+b-2),
	\end{equation}
	so that 
	\begin{equation}\label{eq:l1l2b}
		\ell \geq k+2b-3.
	\end{equation}
	We will use the following facts.

	\begin{claim}\label{claim:segment12}
		If $\max(k_1,k_2) < \ell_1 + 2$ then $2P_1$, $P_1+P_2$ and $2P_2$ are pairwise disjoint. If $\max (k_1,k_2) \ge \ell_1+2$ then $P_1 \cup P_2$ is $1$--dimensional and $2(P_1 \cup P_2)$ is a segment with a hole of length 
		\begin{equation*}
			h = \max\big\{ \ell_1  - \min (k_1,k_2)+1,0 \big\}.
		\end{equation*}
	\end{claim}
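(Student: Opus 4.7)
The plan is to set up explicit coordinates and then reduce everything to interval arithmetic. Normalize by translation so that
\[ P_1 = [0,k_1-1], \quad L_1 = [k_1, k_1+\ell_1-1], \quad P_2 = [k_1+\ell_1,\, k_1+\ell_1+k_2-1]. \]
From this I write out the three pairwise sums as explicit intervals (these are already essentially listed earlier in the proof of Theorem~\ref{thm:3seg}): $2P_1 = [0,2k_1-2]$, $P_1+P_2 = [k_1+\ell_1,\, 2k_1+\ell_1+k_2-2]$, and $2P_2 = [2k_1+2\ell_1,\, 2k_1+2\ell_1+2k_2-2]$.

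Next I compare endpoints to decide which pairs meet. Since $\ell_1 \geq 1$, the left endpoint of $2P_2$ strictly exceeds the right endpoint of $2P_1$, so $2P_1 \cap 2P_2 = \emptyset$ unconditionally. For the other two pairs: $2P_1 \cap (P_1+P_2) \neq \emptyset$ if and only if $2k_1-2 \geq k_1+\ell_1$, that is $k_1 \geq \ell_1+2$; analogously $(P_1+P_2)\cap 2P_2 \neq \emptyset$ iff $k_2 \geq \ell_1+2$. This gives the first assertion immediately: if $\max(k_1,k_2) < \ell_1+2$ both conditions fail and the three sums are pairwise disjoint.

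For the second assertion assume $\max(k_1,k_2)\geq \ell_1+2$. Then at least one of the two relations above holds, so the matrix $S_{P_1\cup P_2}$ has rank at least one, and Corollary~\ref{cor:segdim} (with $s=2$) yields $\dim(P_1\cup P_2)=1$. To compute the hole I may assume $k_1\geq \ell_1+2$ (the case $k_2\geq \ell_1+2$ is symmetric via the reflection $A^-$). Then $2P_1\cup(P_1+P_2)$ is the single segment $[0,\,2k_1+\ell_1+k_2-2]$, while $2P_2$ starts at $2k_1+2\ell_1$; the gap between them has length
\[ (2k_1+2\ell_1)-(2k_1+\ell_1+k_2-2)-1 \;=\; \ell_1-k_2+1, \]
when this quantity is positive, and otherwise the two pieces overlap and there is no hole. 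Combining with the symmetric case shows the hole size equals $\max\{\ell_1-\min(k_1,k_2)+1,\,0\}$: indeed if $k_1\geq k_2$ then $\min(k_1,k_2)=k_2$ and the formula matches; if instead $k_2>k_1\geq\ell_1+2$ then $\min(k_1,k_2)=k_1\geq\ell_1+2$ so the $\max$ is zero, consistent with the fact that both pairs then meet and the union is a full segment.

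There is no real obstacle here; the entire statement reduces to comparing the endpoints of the three explicit intervals. The one point requiring a little care is matching the two natural ways of recording the hole (by which of $P_1,P_2$ is longer) with the symmetric expression $\min(k_1,k_2)$ in the claim, which is handled by the reflection symmetry and the case split in the last paragraph.
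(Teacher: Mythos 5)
Your proof is correct and follows essentially the same route as the paper's: both reduce the claim to comparing endpoints of the three explicit intervals, with the only cosmetic differences being the choice of normalization (the paper assumes $k_1\le k_2$ so the hole sits between $2P_1$ and $P_1+P_2$, while you assume $k_1\ge\ell_1+2$ so it sits between $P_1+P_2$ and $2P_2$) and your explicit appeal to Corollary~\ref{cor:segdim} for the dimension statement, which the paper leaves implicit. No gaps.
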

	
	\begin{proof}
		We note that $2P_1$ does not intersect $P_1+P_2$ if and only if  $\max (2P_1) < \min (P_1+P_2)$, which is equivalent to $k_1 < \ell_1+2$. Likewise, $P_1+P_2$ does not intersect $2P_2$ if and only if  $k_2 < \ell_1+2$, establishing the first part of the claim. 
		
		Assume without loss of generality that $k_1 \le k_2$ and  $k_2 \ge \ell_1+2$. Then $(P_1+P_2) \cup 2P_2$ is a segment since the two parts intersect. In particular,  $P_1 \cup P_2$ is $1$--dimensional. Moreover,  either $2P_1\cup (P_1+P_2)\cup 2P_2$ is a segment or a segment with a hole  of length $h=\min(P_1+P_2)-\max (2P_1)-1= \ell_1  - k_1+1$, establishing the second part of the claim. 
	\end{proof}
	
	By the above Claim,  if both $\max(k_1,k_2) < \ell_1 + 2$ and $\max(k_2,k_3) < \ell_2 + 2$, then the five segments in 
	\begin{equation*}
		2P_1\cup (P_1+P_2)\cup 2P_2 \cup (P_2+P_3) \cup 2P_3
	\end{equation*}
	are pairwise disjoint. Using Corollary~\ref{cor:segdim} it follows that ${\rm rank} (S_A) \leq 1$ and hence $\dim (A) \geq 2$, contradicting the assumption of this case.
	
	We will therefore without loss of generality assume that $\max (k_1,k_2) \geq \ell_1+2$. In this case we have
	
	\begin{claim}\label{claim:segment23}
	$\max \{k_2,k_3\}<\ell_2+2$.
	\end{claim}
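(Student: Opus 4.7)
The plan is to derive a contradiction by assuming $\max(k_2,k_3)\ge \ell_2+2$ in addition to the standing hypothesis $\max(k_1,k_2)\ge \ell_1+2$. Under both hypotheses, Claim~\ref{claim:segment12} applied to each of the pairs $(P_1,P_2)$ and $(P_2,P_3)$ shows that $S_{12}:=2(P_1\cup P_2)$ and $S_{23}:=2(P_2\cup P_3)$ are each single segments with at most one hole, of sizes $h_1\le \ell_1$ and $h_2\le \ell_2$ respectively.

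First I would verify that each potential hole sits outside the convex hull of the opposite segment. By Claim~\ref{claim:segment12}, any hole of $S_{12}$ lies either in $[2k_1-1,k_1+\ell_1-1]$ (when $k_1\le \ell_1$) or in $[2k_1+\ell_1+k_2-1,2(k_1+\ell_1)-1]$ (when $k_2\le \ell_1$), and hence below $\min S_{23}=2(k_1+\ell_1)$. A symmetric calculation shows any hole of $S_{23}$ lies strictly above $\max S_{12}=2(k_1+\ell_1+k_2-1)$. Since the convex hulls of $S_{12}$ and $S_{23}$ overlap in the interval containing $2P_2$ and together cover $[0,2(k+\ell-1)]$, both holes persist in the union, yielding
\begin{equation*}
	|2A|\ge |S_{12}\cup S_{23}|=2(k+\ell)-1-h_1-h_2\ge 2k+\ell-1.
\end{equation*}

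Next I would invoke the bound $\vol(k,T,1)\ge 2T-4k+5$ valid for every $T\ge 3k-4$, which is a consequence of the fact that the right-hand side of~\eqref{eq:max} is a lower bound for $\vol(k,T,d)$: the formula equals $2T-4k+5$ throughout the $c=2$ regime $T\in[3k-4,4k-8]$ and strictly exceeds it for larger $c$. Applied to the extremal set $A$, this gives $k+\ell=\vol(A)\ge 2|2A|-4k+5$. Using $|2A|\ge 2k+\ell-1$ we obtain $k+\ell\ge 2\ell+3$, hence $\ell\le k-3$; using $|2A|\ge 3k-3$ instead yields $k+\ell\ge 2k-1$, hence $\ell\ge k-1$. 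These two ranges are incompatible, completing the contradiction.

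The main obstacle is the careful bookkeeping needed in the first step: one must track where the possible holes of $S_{12}$ and $S_{23}$ sit and confirm that they lie outside the overlap region of the two convex hulls, so that they are neither absorbed by the other segment nor covered by $P_1+P_3$. Claim~\ref{claim:segment12} narrows each hole down to one of at most two explicit intervals depending on which of $k_i,k_{i+1}$ is smaller, making this a short case check.
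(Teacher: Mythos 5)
Your proof is correct and follows essentially the same route as the paper: apply Claim~\ref{claim:segment12} to both pairs of consecutive segments, observe that $2(P_1\cup P_2)\cup 2(P_2\cup P_3)$ covers $[0,2a]$ up to two holes of total length at most $\ell$, deduce $|2A|\ge 2k+\ell-1$, and play this off against the extremality lower bound $\vol(A)=k+\ell\ge 2|2A|-4k+5$ (the paper's \eqref{eq:lba}--\eqref{eq:l1l2b}, phrased there via $b$ rather than via $\ell$, but arithmetically identical). The only cosmetic difference is that the paper first pins down $\max\{k_1,k_2\}=\max\{k_2,k_3\}=k_2$ before locating the holes, a step your direct case check on the hole positions renders unnecessary.
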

	
	\begin{proof} Suppose on the contrary that $\max \{k_2,k_3\}\ge \ell_2+2$. Then, using \eqref{eq:l1l2b},
	$$
	k+2b-3\le \ell \le \max\{k_1,k_2\}+\max\{k_2,k_3\}-4
	$$
	which implies $\max\{k_1,k_2\}=\max\{k_2,k_3\}=k_2$. It follows that $\max (2P_2)=2(k_1+l_1+k_2-1)\ge 2(k_1+\ell_1)+k_2+\ell_2=\min (P_2+P_3)$. Hence, the sets $2(P_1\cup P_2)$ and $2(P_2\cup P_3)$ overlap and, by Claim~\ref{claim:segment12}, $2A$ consists of the interval $[0,2a]$ with two holes of total length at most 
	$$
	 \max \{ \ell_1-k_1+1,0\}+\max \{\ell_2-k_3+1,0\}\le \ell.
	$$
	Therefore, by using $a=k+\ell-1$ and \eqref{eq:l1l2b}, we obtain $|2A|\ge 2a-\ell+1\ge 3k+2b-4$ and therefore $A$ is not extremal.
	\end{proof}
	
	It follows from Claim~\ref{claim:segment12} and Claim~\ref{claim:segment23} that the three segments $2P_2, P_2+P_3, 2P_3$ are pairwise disjoint.
	Since $A$ is one--dimensional, $2(P_1\cup P_2)$ must intersect $P_1+P_3$. In particular, $\max (2P_2)\ge \min (P_1+P_3)$ which yields
	\begin{equation}\label{eq:ubl2}
	 k_1+\ell_1+k_2\ge \ell_2+2.
	\end{equation}

	\begin{claim}\label{claim:k3=1}
	$k_3=1$.
	\end{claim}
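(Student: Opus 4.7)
The plan is to argue by contradiction: assume $k_3 \ge 2$ and contradict the extremality of $A$. I will focus on the main case of doubling constant $c=2$ (so $|2A|=3k-4+b$ with $b\ge 1$ and $\vol(A)=2k+2b-3$); extremality then translates to $\ell = k+2b-3$ and the compact identity $|2A| = (5k+\ell-5)/2$.

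Set $B = P_1 \cup P_2$ and decompose $2A = 2B \cup (B+P_3) \cup 2P_3$. From Claim~\ref{claim:segment23} one has $\ell_2 \ge k_3-1$ and $\ell_2 \ge k_2-1$, which together with $\max(2B) < \min(2P_3)$ implies that the three pieces are mutually disjoint except for $2B \cap (B+P_3)$. Hence
$$|2A| = |2B| + |B+P_3| + (2k_3-1) - |2B \cap (B+P_3)|.$$
Since $P_3$ is a segment, $|B+P_3|$ can be evaluated exactly: it equals $k+k_3-2$ when $k_3 \le \ell_1+1$ and $k+\ell_1-1$ when $k_3 \ge \ell_1+2$, matching the bound of Theorem~\ref{thm:ls}. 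The WLOG assumption $\max(k_1,k_2) \ge \ell_1+2$ yields $\ell_1 \le |B|-2$, so Theorem~\ref{thm:3k-4} applied to $B$ gives $|2B| \ge 2(k-k_3)+\ell_1-1$. Finally, $|2B \cap (B+P_3)|$ lies in the range $[\min(P_3), 2\max(B)]$ of length $(k-k_3)+\ell_1-\ell_2-1$, positive by~\eqref{eq:ubl2}.

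If $k_3 \ge \ell_1+2$, assembling the above produces $|2A| \ge 2k+k_3+\ell-2$; equating with $(5k+\ell-5)/2$ gives $2k_3 + \ell \le k-1$, and since $\ell \ge k-1$ (from $b \ge 1$) this forces $k_3 \le 0$, contradicting $k_3 \ge 2$.

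If $k_3 \le \ell_1+1$, the generic range bound on $|2B \cap (B+P_3)|$ is too crude and must be computed exactly. Since $\ell_2 \ge k_2-1$, one shows $(P_2+P_3) \cap 2B = \emptyset$, so the intersection reduces to $(P_1+P_3) \cap 2B$, and further (after swapping $P_1 \leftrightarrow P_2$ in the alternative WLOG) to $(P_1+P_3) \cap 2P_2$. Two geometric configurations arise. If $P_1+P_3 \subseteq 2P_2$, equivalently $\ell_2+k_3 \le \ell_1+k_2$, the overlap equals $k_1+k_3-1$; substituting in the extremality equation yields $\ell_2 = k + \ell_1 - 3$, and the sub--case condition then forces $k_1+2k_3 \le 3$. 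Otherwise $\ell_2+k_3 > \ell_1+k_2$ and the overlap equals $k_1+\ell_1+k_2-\ell_2-1$; the extremality equation becomes $k_1 - k_2 + 3k_3 + \ell_2 - \ell_1 = 3$, which combined with $\ell_2-\ell_1 \ge k_2-k_3+1$ forces $k_1+2k_3 \le 2$. Both outcomes are incompatible with $k_1 \ge 1$ and $k_3 \ge 2$.

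The main obstacle is the refinement required in the case $k_3 \le \ell_1+1$, where the generic range bound for $|2B \cap (B+P_3)|$ must be replaced by an exact computation that tracks how $P_1+P_3$ sits relative to $2P_2$. Once this geometric bookkeeping is in place, the extremality equation collapses to the very tight inequality $k_1+2k_3 \le 3$ (or $\le 2$), which instantly rules out $k_3 \ge 2$ regardless of $k$.
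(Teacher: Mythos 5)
Your route is genuinely different from the paper's: you try to evaluate $|2A|$ directly from the decomposition $2A=2(P_1\cup P_2)\cup\big((P_1\cup P_2)+P_3\big)\cup 2P_3$ and close with an arithmetic contradiction, whereas the paper uses a compression argument (replace $\min P_3$ by a point adjacent to the longer of $P_1,P_2$, show the doubling strictly drops while the volume is preserved, and use extremality to force the perturbed set to become $2$--dimensional, which pins down $\max(2P_2)=\min(P_1+P_3)$ and lets Theorem~\ref{thm:ls} finish in three lines). Your first case ($k_3\ge\ell_1+2$) is fine, and note that you only need the inequality $\ell\ge k+2b-3$, hence $|2A|\le(5k+\ell-5)/2$ — the equality $\ell=k+2b-3$ is part of what the theorem is proving and is not available, though you happen to use it in the harmless direction.

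The case $k_3\le\ell_1+1$ has two genuine gaps. First, the reduction of $2(P_1\cup P_2)\cap\big((P_1\cup P_2)+P_3\big)$ to $(P_1+P_3)\cap 2P_2$ is not justified: ``swapping $P_1\leftrightarrow P_2$'' is not an available symmetry (the only isomorphism at your disposal is reflection, which exchanges $P_1$ with $P_3$ and $\ell_1$ with $\ell_2$), and one needs $\ell_2\ge k_1-1$ to keep $P_1+P_3$ clear of $P_1+P_2$ and $2P_1$. The configuration $\ell_2\le k_1-2$ (with $k_1$ and $\ell_1$ large) is compatible with Claims~\ref{claim:segment12} and~\ref{claim:segment23} and with \eqref{eq:ubl2}, and your argument never treats it. Second, your closed forms for the overlap and for $|2(P_1\cup P_2)|$ silently assume $k_1\le k_2$ and that the hole in $2(P_1\cup P_2)$ has positive length $h=\ell_1-\min(k_1,k_2)+1$; for instance ``$\ell_2=k+\ell_1-3$'' only comes out under both assumptions, and the omitted case $k_1>k_2$ is not vacuous — it is precisely the paper's Case~3.2, which produces the genuine extremal sets of type (ii). Until the configuration where $P_1+P_3$ reaches back past $2P_2$, the case $h=0$, and the case $k_1>k_2$ are handled, the claimed conclusion $k_1+2k_3\le 3$ does not cover all extremal sets with $k_3\ge 2$.
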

	
	\begin{proof} Suppose on the contrary that $k_3>1$. We then have $\ell_1>1$, since otherwise \eqref{eq:ubl2} and \eqref{eq:l1l2b} give $k_1+k_2\ge \ell_2+1\ge  k+2b-3$ and we get $k_3\le 1$.  
	
	Let $B=2(P_1\cup P_2) \cup (P_1+P_3)\cup (P_2+P_3)$. We can write $2A$ as the disjoint union
	$$
	2A=B \cup 2P_3.
	$$
	Consider now the set $A'$ obtained from $A$ by replacing $\min (P_3)$ with $\max (P_1)+1$ if $k_1\ge k_2$ and with $\min (P_2)-1$ otherwise. The resulting set is still composed of three disjoint segments, $A'=P'_1\cup P'_2\cup P'_3$ with $\ell'_1=\ell_1-1$, $\ell_2' = \ell_2 + 1$ and $\min \{k_1',k'_2\}=\min \{k_1,k_2\}$. We can   write $2A'$ as the disjoint union
	$$
	2A'=B' \cup 2P_3',
	$$
	where $B'=2(P_1'\cup P_2') \cup (P_1'+P_3')\cup (P_2'+P_3')$. We have $|2P'_3|=|2P_3|-2$. Let us show that $|B'|\le |B|+1$. 
	
	By Claim~\ref{claim:segment12}, $|2(P'_1\cup P'_2)|\le |2(P_1\cup P_2)|+1$. If $k_1\ge k_2$ then $P_2'=P_2$ and  $|P'_2+P'_3|=|P_2+P_3|-1$, while $P'_1+P'_3=(P_1+P_3)+1$. If $P_1+P_3$ and $P_2+P_3$ are disjoint then the two last modifications compensate each other, while if they intersect then there is no change in the cardinality of their union. Similarly, if $k_1<k_2$ then $P_1'=P_1$ and we loose one unit in $P'_1+P'_3$ while $P'_2+P'_3$ is translated one unit to the right from $P_2+P_3$ and again there is no change in the cardinality of the union of these two segments. 
			
In either case, we get $|2A'|<|2A|$ so that, if $A'$ is one--dimensional it would have the same volume as $A$ contradicting that  $A$ is extremal. It follows that $A'$ must be $2$--dimensional. This  implies $\max (2P'_2)<\min (P'_1+P_3')$. Since $\max (2P_2)\ge \min (P_1+P_3)$, we have equality in the last inequality.  Therefore,
	\begin{equation}
		|2A| = |2(P_1\cup P_2)|+|P_3+A|-1.
	\end{equation}
	By Theorem~\ref{thm:ls} we have 
	\begin{equation}
		|P_3+A| \ge |A|+2|P_3|-2=k+2k_3-2.
	\end{equation}
	Therefore, 
	\begin{align*}
		|2A| &= |2(P_1\cup P_2)|+|P_3+A|\\
			&\ge (\max(2P_2)+1-\ell_1)+(k+2k_3-2)\\
			&= 2(k-k_3+\ell_1-1)-\ell_1+k+2k_3-2\\
			&=3k+\ell_1-4,
	\end{align*}
	so that $\ell_1\le b$. But then, by \eqref{eq:ubl2}, we have 
	\begin{equation}
		\ell=\ell_1+\ell_2\le b+(k-k_3+b-2)=k-k_3+2b-2,
	\end{equation}
	contradicting \eqref{eq:l1l2b}. Therefore $A$ could not have been extremal.
	\end{proof}
	
	We can therefore assume $P_3=\{a\}$. It follows that
	$$
	2A=2(P_1\cup P_2)\cup (a+A).
	$$
	Moreover, 
	$$
	\min(P_2+P_3)-\max (P_1+P_3)=\ell_1-k_3+2=\ell_1+1> 1.
	$$

	We next consider two cases.
	
	{\bf Case 3.1:} $k_1\le k_2$. 
	
	The sumset  $2A$ can be written as the disjoint union 
	$$
	2A=B\cup (P_2+P_3)\cup 2P_3,
	$$
	where $B=2(P_1\cup P_2)\cup (P_1+P_3)$ is an interval with a hole of length $h=\max \{\ell_1-k_1+1,0\}$. Such a one--dimensional set with $k_1>1$ cannot be extremal since, by exchanging $\max (P_1)$ by $\min(P_2)-1$ we get a one--dimensional set with the same volume and smaller doubling. It follows that $k_1=1$. By using \eqref{eq:ubl2}, we get $\max (2P_2)-\max(P_1+P_3)=\ell_1+k_2-\ell_2-1\ge 0$. In this case $2(k+\ell_1-2)=\max (2P_2)\ge a = \max(P_1+P_3)$ and, again by extremality, equality holds. We thus have $|2A|=(a-\ell_1+1)+(k-2)+1=3k+\ell_1-4$, leading to Case (i) of the Theorem.

	{\bf Case 3.2:} $k_1>k_2$. 	
	
	From
	\begin{align*}
		3k-4+b=|2A|&=|2(P_1\cup P_2)|+|a+A|-|2(P_1\cup P_2)\cap (a+A)|\\
		&= 2(k-1+\ell_1)-1- \max\{ \ell_1 -k_2+1,0 \}+k \\
		& \quad -|2(P_1\cup P_2)\cap (a+A)|,
	\end{align*}
	we obtain
	\begin{equation}\label{eq:inta+A}
		|2(P_1\cup P_2)\cap (a+A)| = 2\ell_1+1 - \max\{ \ell_1 -k_2+1,0 \} - b.
	\end{equation}
	For this equality to hold, a necessary condition is 
	\begin{equation}\label{eq:condition}
	\max (2P_2)-a+1\ge 2\ell_1+1 - \max\{ \ell_1 -k_2+1,0 \} - b.
	\end{equation} 

	 By using $\max(2P_2)=2(k-1)+2\ell_1-2$ and $a=k+\ell-1$ in \eqref{eq:condition}, we obtain
	\begin{equation}\label{eq:l2}
		\ell \le  k+b+\max\{ \ell_1-k_2+1,0\}-3.
	\end{equation}
	By \eqref{eq:l1l2b}  we  have $\ell_1\ge k_2+b-1$. On the other hand, since $|2(P_1\cup P_2)\cap (a+A)| \leq |2P_2| = 2k_2-1$, it follows from \eqref{eq:inta+A} that $\ell_1\le b+k_2-1$. Hence $\ell_1=b+k_2-1$, there is equality in \eqref{eq:l2} and $2P_2$ must  be included in $P_1+P_3$, so that $2k_2-1 = |2P_2| \leq |P_1+P_3| =  k_1$. This gives  Case~(ii) of the Theorem.  \end{proof}

\end{document}